\newcommand{\RadonG}{\Radon_{\mathsf s}}
\newcommand{\RadonGadj}{\Radon_{\mathsf s}^*}
\newcommand{\RadonSadj}{\Radon_{\ell}^*}
\newcommand{\RadonS}{\Radon_{\ell}}  
\newcommand{\BackG}{\Back_{\mathsf s}}
\newcommand{\BackS}{\Back_{\ell}}
\newcommand{\MG}{\mathcal M_{\mathsf s}}
\newcommand{\MS}{\mathcal M_{\ell}}
\newcommand{\JG}{\mathsf J_{\mathsf s}}
\newcommand{\JS}{\mathsf J_{\ell}}
\newcommand{\MGadj}{\mathcal M_{\mathsf s}^*}
\newcommand{\VG}{V_{\mathsf s}}
\newcommand{\VS}{V_{\ell}}
\newcommand{\R}{\mathbb{R}}
\newcommand{\quotient}{\R / 2\pi\mathbb{Z}}
\newcommand{\PIquotient}{\R / \pi\mathbb{Z}}
\newcommand{\Radon}{\mathcal{R}}
\newcommand{\Back}{\mathcal{B}}
\newcommand{\dd}{\mathrm{d}}
\newtheorem{theo}{Theorem}
\theoremstyle{definition}
\newtheorem{remark}{Remark}
\newtheorem{lemma}{Lemma}
\title[Alternative Fan-beam Backprojection and Adjoint Operators]{Alternative Fan-beam Backprojection \\ and Adjoint Operators} % as a series representation}
\author{Patricio Guerrero$^*$}
\address{$^*$Department of Mechanical Engineering, KU Leuven, Leuven, Belgium}
\thanks{Corresponding author: \texttt{patricio.guerrero@kuleuven.be}}
\author{Matheus Bernardi$^\dag$}
\author{Eduardo Miqueles$^\dag$}
\address{$^\dag$Brazilian Synchrotron Light Laboratory, National Center for Research in Energy and Materials, Campinas, Brazil}
\def\paragraph{\@startsection{paragraph}{4}%
  \z@\z@{-\fontdimen2\font}%
  {\normalfont\itshape}}
\begin{document}

%\abstract{We present in this work alternative analytic formulations for the fan-beam tomographic backprojection operation and its associated adjoint transform in standard (equiangular) and linear (equidistant) detector geometries. The proposed formulations are obtained from a recent backprojection theorem in parallel tomography. Such formulations are written as a Bessel-Neumann series in the frequency domain that can be implemented as a $O(N^{2.3729})$ matrix multiplication. Proofs are provided together with numerical simulations compared with conventional fan-beam $O(N^{3})$ backprojection representations showing more robustness when dealing with highly noisy data.}
%\maketitle

\maketitle
\begin{abstract}
We present in this work alternative analytic formulations for the fan-beam tomographic backprojection operation and its associated adjoint transform in standard (equiangular) and linear (equidistant) detector geometries. The proposed formulations are obtained from a recent backprojection theorem in parallel tomography. Such formulations are written as a Bessel-Neumann series in the frequency domain that can be implemented as an $O(N^{2.3729})$ matrix multiplication. Proofs are provided together with numerical simulations compared with conventional fan-beam $O(N^{3})$ backprojection representations showing more robustness when dealing with highly noisy data.  

\vspace{2em}
\noindent \textbf{Keywords:} Fan-beam, backprojection, tomographic reconstruction, Radon transform.

\noindent \textbf{AMS subject classifications:} 44A12, 41A58.\\

\noindent \textbf{Published in:} \href{https://doi.org/10.1515/jiip-2022-0029}{\emph{J. of Inverse and Ill-Posed Problems}, 2023.} 
\end{abstract}

\section{Introduction}

Fan-beam tomographic measurements are used in different modalities of non-destructive imaging, as those obtained using X-rays. A typical tomographic device using fan-beam geometry is shown in Figure \ref{fig:setup}. 
%We assume that the distance between the pair source-detector is high compared to the size of the sample. 
This is a widely used and  known technique, and there are many reconstruction algorithms for this configuration. After being generated with a given aperture angle and a fixed source-detector distance, the wavefront hits the sample originating a signal on the detector. Different propagation regimes can be considered with a varying distance \cite{luke2002optical, miqueles2020}, although we will consider a pure mathematical signal idealized as the fan-beam Radon transform of the given object.
\begin{figure}[t]
\centering
\includegraphics[scale=0.7]{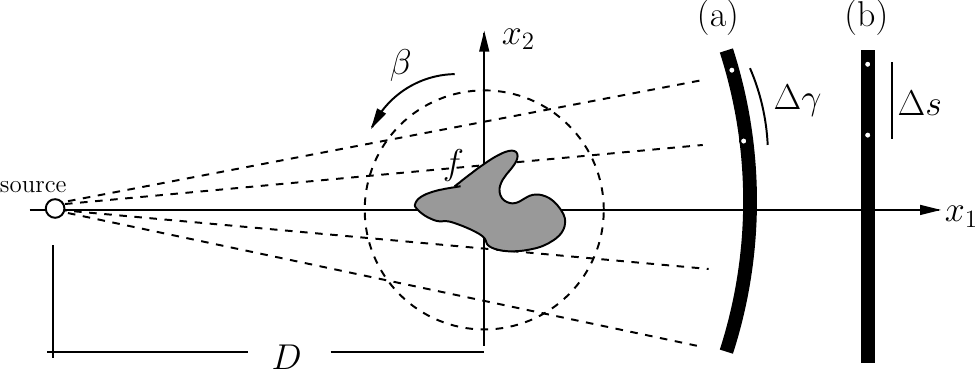}
\caption{Tomographic setup for fan-beam geometry: (a) \emph{standard} detector, i.e., equispaced angular mesh with $\Delta \gamma$; sinograms obtained here are denoted by $w(\gamma,\beta)$. (b) \emph{linear} detector, i.e., equispaced mesh with $\Delta s$ perpendicular to the central ray; sinograms obtained here are denoted by $g(s,\beta)$.}
\label{fig:setup}
\end{figure}

For the tomographic parallel case by means of the classical backprojection formulation for image reconstruction, we can use the recent backprojection slice theorem formulation \cite{miqueles2018backprojection}. It is a formula that reduces a complete backprojection of a 2D sinogram from a computational cost of $O(N^3)$ to $O(N^2 \log N)$ with $N\times N$ pixels in the final reconstructed image and $N$ tomographic rotations. In this work, we take advantage of that formulation for two other popular fan-beam geometries, that are a) equispaced angles within the fan with uniform size $\Delta\gamma$, and b) equispaced points in a linear detector with a mesh step $\Delta s$. As indicated in \cite{natterer2001mathematical}, we refer to each acquisition as \emph{standard fan} and \emph{linear fan}, respectively, and these are illustrated in Figure \ref{fig:setup}.(a) and \ref{fig:setup}.(b). The linear case is easier to be implemented at a synchrotron beamline with an appropriate optical setup, whereas the former is more used with medical or industrial X-rays scans. %Also, benchtop CT scanners, also based on the conventional cone-beam geometry could take advantage of the approach presented in this work. 
For completeness, we discuss both standard and linear cases.

\paragraph*{Related work, contributions and overview}

Further work has been done after the conventional $O(N^3)$ backprojection strategies for fan-beam sinograms were established in \cite{kak1988}, where both the fan-beam filtered backprojection (FBP) algorithm and a fan-to-parallel rebinning algorithm is formalized. In \cite{zhao2014fan}  a generalized Fourier slice theorem (FST) for fan-beam geometries is introduced presenting the same $O(N^3)$ computational complexity as a conventional backprojection in the parallel case. In addition it requires a rebinning operation on the measured data from fan to parallel geometry and then it generalize the FST. Further relations on the frequency domain are obtained in \cite{kazantsev2008fan} where a fan-to-parallel rebinning is also necessary and the Gerchberg-Papoulis iterative algorithm is used to obtain the final reconstruction; the method is mentioned to be computationally expensive although the computational complexity is not explicitly described. A different approach is established in \cite{chen2005novel} without direct rebinning in the image domain but in the frequency domain of the reconstruction; however, a weighted backprojection of the data is needed and no algorithm with lower cost than $O(N^3)$ is provided to this end. In \cite{wei2005relation}, a series formulation for the fan-beam FBP algorithm is presented where the backprojection is written as the first order approximation of the FBP series. Using a hierarchical approach, \cite{george2013fast} presents an $O(N^2\log N)$ algorithm based on efficient sampling of the fan-beam geometry and without rebinning to parallel projections, although a series of intermediate interpolations are needed on every level of the hierarchical structure. 

Fast algorithms are either based on deriving a mathematical expression relying on low complexity auxiliary transformations, e.g., the fast Fourier transform, or hierarchically, taking a divide-and-conquer approach \cite{bresler2007}. While the method in \cite{george2013fast} relies on the latter approach, this work relies on the former. We present here an analytic formulation for the fan-beam backprojection operation as well as for the associated adjoint transform, considering both standard (Theorem \ref{teostandard}) and linear (Theorem \ref{teolinear}) geometries. Such formulations can be implemented numerically as a multiplication of two matrices, allowing the use of fast matrix multiplication algorithms \cite{matrix, matrix2} resulting in a computational complexity of $O(N^{2.3729})$. It is worth to recall then that our method is computationally more expensive than the $O(N^2\log N)$ hierarchical method of \cite{george2013fast} and the purpose of this work is an alternative analytic and simple formulation of the fan-beam backprojection and adjoint transforms still less expensive than the usual $O(N^3)$ formulation. 

This work is organized as follows. Section 2 presents a review of conventional fan-beam backprojection operators for both standard and linear geometries and the relationship with their associated adjoint transforms, which are defined as bounded operators on Hilbert spaces. In section 3 we rewrite the rebinning strategy in terms of the inverse rebinning operator as a two-step backprojection formula. In this section, the adjoint transforms of both geometries are also formalized as a composition of two operators and therefore, we show that this formulation is equivalent to the one in section 2 from the uniqueness of the adjoint of bounded operators. Section 4 presents our fan-beam adjoint and backprojection theorems as a Bessel-Neumann series representation (referred here as BN series method) and their proofs,
%, followed by a regularized strategy to  obtain a Cartesian coordinates representation, 
while section 5 numerically supports our method with simulated data where the backprojection is computed as a straightforward matrix multiplication based on the BN series method and its performance is compared with the conventional rebinning strategy.  

\section{Fan-beam backprojections}

\subsection{The 2D Radon transform}

The two-dimensional parallel Radon transform is defined \cite{deans2007radon} as the linear operator $\Radon \colon U \to V$ where $U$ is the Schwartz space $\mathcal S(\R^2)$ of rapidly decreasing functions defined on $\R^2$, so-called \textit{feature} space; and $V$, the \textit{sinogram} space, is the Schwartz space defined on the unit cylinder $\mathcal S(\R \times \quotient)$ where $\quotient$ is the quotient group of reals modulo $2\pi$. 
%This definition holds when standard parallel projections are considered.

%For any element $\dot\theta \in \quotient$, there exists     

Let $(t, \theta) \in \R\times \quotient$ and $f \in U$. $\Radon f$ is defined as the integral operator
\begin{equation}\label{radon}
\Radon f(t,\theta) % = \int_\R f( t\bm \xi_\theta + s \bm\xi_\theta^\perp ) \, \mathrm d s,  
= \int_{\R^2} f( \bm x ) \delta (t - \bm x \cdot \bm \xi_\theta) \, \dd \bm x,
\end{equation}
where $\delta$ is the Delta distribution and $\bm\xi_\theta = (\cos\theta, \sin\theta)^T$. % and $\bm\xi_\theta^\perp = (-\sin\theta, \cos\theta)^T$. 
We will denote by $f_\mathsf{P} \in U$ a feature function acting on $\bm x = (\rho, \phi) \in \R_+ \times [0, 2\pi[ $ expressed in polar coordinates. $\Radon f_\mathsf{P}$ is then written as 
\begin{equation}\label{polar}
\Radon f_\mathsf{P}(t,\theta) 
= \int_0^{2\pi} \int_{\R_+} f_\mathsf{P}( \rho, \phi ) \delta (t - \rho\cos(\theta - \phi)) \, \rho \dd \rho \dd \phi.
\end{equation}
Note that $\Radon f$ is an even function on the unit cylinder, that is, \begin{equation}\label{Rsimmetry}
\Radon f(t, \theta) = \Radon f(-t, \theta+\pi). 
\end{equation}

\paragraph*{Adjoint and backprojection.} Let $T \colon X \to Y$ be a bounded linear operator between Hilbert spaces $X$ and $Y$. Its adjoint operator is defined as the operator $T^*\colon Y \to X$ that verifies 
\begin{equation}\label{adjoint}
	\forall u\in X, \quad \forall v\in Y, \quad \langle Tu , v\rangle_Y = \langle u,  T^* v \rangle_X.
\end{equation}  
%where $\langle \cdot, \cdot \rangle_{U}$, $\langle \cdot, \cdot \rangle_{V}$ are the standard $L^2$ inner products on the respective space.

The adjoint and backprojection operators of the Radon transform  are closely related as follows \cite{deans2007radon}. Expanding the definitions of $U$ and $V$ to $L^2$ spaces \cite{euclidean}, the adjoint of $\Radon$ is the operator $\Radon^* \colon V \to U$ defined for $p \in V$ and $\bm x \in \R^2$ by
\begin{equation*}\label{backparallel}
    \Radon^* p (\bm x) = \int_0^{2\pi} p(\bm x \cdot \bm \xi_\theta, \theta) \, \mathrm d\theta.
\end{equation*}

With standard $L^2$ inner products on $U$ and $V$, it is clear that $\Radon^*$ verifies (\ref{adjoint}). 

In the FBP algorithm \cite{natterer2001mathematical}, the backprojection operation is applied after filtering the data with an adequate convolutional filter. Such backprojection operator, denoted $\Back$ is defined by limiting the last integral to $[0, \pi]$, and from the symmetry property (\ref{Rsimmetry}) we have $\Radon^* \equiv 2\Back$.

Computing $b=\Back p$ could be extremely expensive for discrete versions of the sinogram $p$, when $\bm x$ covers a domain with a large number of points (pixels in practice) and also $(t, \theta)$ covers a large number of pixels and a variety of angles (according to Crowther's criterion \cite{crowther1970reconstruction}). This is the case for synchrotron tomographic projections using high-resolution detectors with more than $2048\times 2048$ pixels and more than $2048$ angles \cite{gio}. Recently \cite{miqueles2018backprojection}, a low-complexity formulation for computing $b$ was obtained in the frequency domain using polar coordinates, the Backprojection Slice Theorem (BST). Such theorem reads for $p \in V$ and $\sigma > 0$,
\begin{equation}\label{bst}
    \widehat{ \Back p}(\sigma \bm\xi_\theta) = \frac{\hat{p}(\sigma,\theta)}{\sigma},
\end{equation}
where $\widehat{ \Back p}$ is the 2D Fourier transform of $\Back p$ in polar frequency coordinates and $\hat{p}$ is the 1D Fourier transform of $p$ with respect to $t$. The action of $\{\Radon,\Back\}$ is presented in Figure \ref{fig:idea}. It is a well known fact that 
$\{p,f\}$ are related through the FST \cite{natterer2001mathematical, deans2007radon}, 
while $\{p,b\}$ through the BST \cite{miqueles2018backprojection}.
%\begin{remark}
%Due to the fact that $\sphere$ and the quotient group $\quotient$ of reals modulo $2\pi$ are isomorphic, the domain $V$ of sinograms could also be defined on $\R \times \quotient$. In fact, 
%\end{remark}
\begin{figure}%[t]
\centering
\begin{tikzpicture}
    % draw the sets
    %\filldraw[fill=blue!20, draw=blue!60] (-1.5,0) circle (1cm);
    %\filldraw[fill=red!20, draw=red!60] (1.5,0) circle (1cm);
    \filldraw [fill=white, draw=black!60](-1.5,0) circle (1cm);
    \filldraw [fill=white, draw=black!60] (1.5,0) circle (1cm);
    \filldraw [fill=white, draw=black!60] (5.0,0) circle (1cm);
    % the points in the sets (here I just create nodes to use them later on to position
    % the circles and the arrows
    \node (U) at (-2.6, 0.4) {$U$};
    \node (V) at ( 2.6, 0.4) {$V$};
    \node (V) at ( 5.7, 0.4) {$\VG$};
    
    \node (f) at (-1.6, 0.4) {${f}$};
    \node (b) at (-1.6,-0.5) {${b}$};
    \node (p) at ( 1.6, 0.0) {${p}$}; %{$g\left\{\begin{array}{cc}=\Radon f\\\not=\Radon f\end{array}\right.$};
    \node (g) at ( 4.9, 0.0) {${w}$}; %{$g\left\{\begin{array}{cc}=\Radon f\\\not=\Radon f\end{array}\right.$};

    % draw the arrows
    \draw[->] (f) -- node[above] {~~$\Radon$} (p);
    \draw[->] (p) -- node[above] {~~$\MG$} (g);
    \draw[->] (p) -- node[below] {~~$\Back$} (b);
    \draw[->] (g) [out=-110, in=-40] to node[above]{$\BackG$} (b);  
\end{tikzpicture}
%\vspace{-20pt}
\caption{Action diagram for operators $\{\Back,\Radon\}$ and $\{\BackG,\RadonG\}$, with $\RadonG = \MG\Radon$, $\BackG = \Back \MG^*$ and
a generalized change of variables $\MG$.}
\label{fig:idea}
\end{figure}

%%%%%%%%%%%%%%%%%%%%%%%%
%%%%%%%%%%%%%%%%%%%%%%%%FAN BEAM 
%%%%%%%%%%%%%%%%%%%%%%%%

\subsection{Fan-beam geometries}

There exist two main fan-beam parameterizations for the Radon transform \cite{kak1988}, the first $\RadonG \colon U \to \VG$ is referred as the \textit{standard fan-beam} transform, where the domain of sinograms $\VG$ lies within the set $\PIquotient \times \quotient$. The second case, $\RadonS \colon U \to \VS$ is referred as the \textit{linear fan-beam} transform, with sinograms domain $\VS$ varying within the set $\R\times\quotient$. 

Let us consider $(\gamma, \beta) \in \PIquotient\times\quotient$ a pair of angles and $s \in \R$. Acting on a feature function $f\in U$, the linear operators $\RadonG$  and $\RadonS$ are defined respectively as 
\begin{equation*}
w(\gamma,\beta) = \Radon f( D\sin\gamma, \beta + \gamma ),
\end{equation*}
and
\begin{equation*}
g(s,\beta) = \Radon f( \dfrac{sD}{\sqrt{s^2+D^2}}, \beta + \arctan\dfrac{s}{D} ),
\end{equation*}
where $D>0$ is the distance of the focal point source to the center of rotation located at the origin, see Figure \ref{fig:setup}. Hence $w \in \VG$ and $g \in \VS$. The following symmetry relationships hold:  
\begin{equation}\label{symmetry}
    \begin{split}
    w(\gamma, \beta) & = w(-\gamma, \beta + 2\gamma + \pi ),\\
    g(s, \beta) & = g(-s,\beta + 2\arctan\dfrac{s}{D} + \pi ).
    \end{split}
\end{equation}

Since both operations represent a change of variables in the classical parallel sinogram, we can use the following notation:
\begin{equation*}
    \RadonG f(\gamma, \beta) = \MG \Radon f(\gamma,\beta),  \quad 
    \RadonS f(s, \beta) = \MS \Radon f(s,\beta),
\end{equation*}
where $\MG \colon V \to \VG$ and $\MS \colon V \to \VS$ are rebinning operators acting on a parallel sinogram $p$, respectively as
\begin{equation}\label{rebinning}
\begin{array}{ll}
\MG p(\gamma,\beta) = p(D\sin \gamma, \beta + \gamma),\\
\MS p(s,\beta) = p(  \dfrac{sD}{\sqrt{s^2+D^2}}, \beta + \arctan\dfrac{s}{D}).	
\end{array}
\end{equation}

From the experimental point of view, we assume that the origin is positioned at the center of rotation, where the sample is centered. In both geometries, for simplicity and without any loss of generality, a virtual detector will be considered centered at the origin. %This can be done simply by scaling numerically the detector. 
In the standard geometry, the angle $\gamma$ from the central ray indicates a position in the circle centered at the beam source with radius $D$, this circle acts as the detector in this geometry and then it's clearly $2\pi$-periodic on $\gamma$. Whereas in the linear geometry, $s$ is the signed height on the linear detector perpendicular to the central ray and centered at the origin, see Figure \ref{fig:setup}.

The change of variables operation $\MG$ is depicted in Figure \ref{fig:idea}. $\MG$ and $\MS$ are well defined operators because $\beta + \gamma$ and $\beta + \arctan\dfrac{s}{D}$ both belong to $\quotient$.
Since it is true that $\RadonG = \MG\Radon$, a conventional functional relation for space $U$ and $\VG$ provides us with the following statement,
\begin{equation} 
\begin{split} \label{eq:adjG}
    \langle \RadonG f, w \rangle_{\VG} & \ = \  \langle \MG \Radon f, w \rangle_{\VG} \\
    & \ = \ \langle \Radon f, \MGadj w \rangle_{V} \  = \ \langle f, \Radon^* \MGadj w \rangle_{U}. 
\end{split}
\end{equation}
%with $\cdot^*$ standing for the adjoint operation. 
%, i.e.,
%\begin{equation}
%    \langle p, g \rangle_{\tiny \VG} = \int \mathrm d\gamma \mathrm d\beta \ %p(\gamma,\beta) g(\gamma,\beta),  \ \ \forall \ p,g \in \VG 
%\end{equation}
%and
%\begin{equation}
%    \langle f, h \rangle_{\tiny U} = \int \mathrm d\bm x \ f(\bm x) g(\bm x),  %\ \ \forall \ f,h \in U.
%\end{equation}
%(same for $\langle \cdot, \cdot \rangle_{\tiny V}$). 

Therefore, it follows that $\RadonGadj \ = \Radon^* \MGadj$. In this work we propose a Fourier approach for equations 
\begin{equation}\label{twosteps}
  \RadonG^* = \Radon^* \MG^* \quad \text{and} \quad  \RadonS^* = \Radon^* \MS^*. 
\end{equation}
 % using different properties of operations $\MG$ and $\MS$. 

\iffalse 
\begin{table}[h]\footnotesize
    \caption{List of symbols}
    \label{tab:notation}
    \centering
    \begin{tabular}{c|l}
    \hline
         $U$ & Feature space \\
         $V$ & Parallel sinogram space \\
         $\VG$ & Standard fan-beam sinogram space \\
         $\VS$ & Linear fan-beam sinogram space \\
         $\Radon$ & Parallel Radon trasform from $U$ to $V$ \\
         $\Back$ & Parallel Backprojection from $V$ to $U$ \\
         $\RadonG$ & Standard fan-beam Radon trasform from $U$ to $\VG$ \\
         $\BackG$ & Standard Backprojection from $\VG$ to $U$ \\
         $\RadonS$ & Linear fan-beam Radon trasform from $U$ to $\VS$ \\
         $\BackS$ & Linear Backprojection from $\VS$ to $U$ \\
         $\widehat{\cdot}$ & Fourier transform \\
         $\bm \xi_\theta, \bm \xi_\theta^\perp$ & $(\cos\theta, \sin\theta)$, $(-\sin\theta,\cos\theta)$  \\
    \hline
    \end{tabular}
    
\end{table}
\fi 

\subsection{Integral representation of the adjoint transforms}

Considering both fan-beam geometries, we provide two main results for both the adjoint and backprojection  operators of $\RadonG$ and $\RadonS$. In the following we denote by $\bm{r}_\beta 
%= D \bm \xi_\beta^\bot
$ 
the Cartesian coordinates of the source.

\begin{lemma} \label{theo:fanG}
The operator $\RadonGadj\colon\VG\to U$, defined for $w \in \VG$ and $\bm x \in \R^2$ by
\begin{equation}\label{eq:adjunta-G}
	\RadonGadj w (\bm x)= \int_0^{2\pi}\dfrac{1}{L_\beta} \, w (\gamma_\beta, \beta) \, \dd \beta, 
\end{equation}
is the the adjoint of $\RadonG$ in the sense of (\ref{adjoint}). Here, $L_\beta = \left\| \bm r_\beta - \bm x \right\|_2$ is the Euclidean distance of the source with the backprojected position $\bm x$ and $\gamma_\beta$ is the angle of such point within the fan, i.e., $\cos\gamma_\beta = \dfrac{1}{DL_\beta} \bm r_\beta \cdot (\bm r_\beta - \bm x) $. 
\end{lemma}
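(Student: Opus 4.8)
The plan is to verify the defining relation (\ref{adjoint}) by a direct computation, exploiting that $\RadonG = \MG\Radon$ is nothing but the Radon transform read off along the fan rays. Taking the standard $L^2$ inner product on $\VG = \PIquotient\times\quotient$ and inserting the Dirac representation (\ref{radon}) of $\Radon f$, I would write
\begin{equation*}
\langle \RadonG f, w\rangle_{\VG}
= \int_0^{2\pi}\!\!\int_{\PIquotient}\!\!\int_{\R^2} f(\bm x)\,\delta\bigl(D\sin\gamma - \bm x\cdot\bm\xi_{\beta+\gamma}\bigr)\,w(\gamma,\beta)\,\dd\bm x\,\dd\gamma\,\dd\beta .
\end{equation*}
By Fubini the $\bm x$-integral can be pulled out, and since $\Radon^*=\Back$ forces this to equal $\langle f,\BackG w\rangle_U$, one reads off the candidate
\begin{equation*}
\BackG w(\bm x) = \int_0^{2\pi}\!\!\int_{\PIquotient}\delta\bigl(D\sin\gamma - \bm x\cdot\bm\xi_{\beta+\gamma}\bigr)\,w(\gamma,\beta)\,\dd\gamma\,\dd\beta .
\end{equation*}
Everything then reduces to collapsing the inner $\gamma$-integral against the delta distribution.

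To do so I would apply the one-dimensional identity $\int\delta(h(\gamma))\,\phi(\gamma)\,\dd\gamma = \sum_{\gamma_*:\,h(\gamma_*)=0}\phi(\gamma_*)/|h'(\gamma_*)|$ with $h(\gamma) = D\sin\gamma - \bm x\cdot\bm\xi_{\beta+\gamma}$. The condition $h(\gamma)=0$ states that $\bm x$ lies on the line whose parallel parameters are $(D\sin\gamma,\beta+\gamma)$; since the source $\bm r_\beta = D\bm\xi_\beta^\perp$ (with $\bm\xi_\theta^\perp = (-\sin\theta,\cos\theta)^T$) satisfies $\bm r_\beta\cdot\bm\xi_{\beta+\gamma} = D\sin\gamma$ for every $\gamma$, each such line passes through $\bm r_\beta$. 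Hence the relevant root is exactly the fan angle $\gamma = \gamma_\beta$ of $\bm x$ as seen from the source, namely the $\gamma_\beta$ of the statement.

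It remains to identify the Jacobian weight. Differentiating gives $h'(\gamma) = D\cos\gamma - \bm x\cdot\bm\xi_{\beta+\gamma}^\perp$. At the root, both $\bm x$ and $\bm r_\beta$ lie on the line orthogonal to $\bm\xi_{\beta+\gamma_\beta}$, so $\bm x - \bm r_\beta$ is parallel to $\bm\xi_{\beta+\gamma_\beta}^\perp$ with length $\|\bm x-\bm r_\beta\| = L_\beta$; together with the elementary computation $\bm r_\beta\cdot\bm\xi_{\beta+\gamma_\beta}^\perp = D\cos\gamma_\beta$ this yields $\bm x\cdot\bm\xi_{\beta+\gamma_\beta}^\perp = D\cos\gamma_\beta \mp L_\beta$ and therefore $|h'(\gamma_\beta)| = L_\beta$. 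Substituting collapses the integral to $\int_0^{2\pi} L_\beta^{-1}\,w(\gamma_\beta,\beta)\,\dd\beta$, which is precisely (\ref{eq:adjunta-G}). One also verifies from $\cos\gamma_\beta = (D^2 - \bm r_\beta\cdot\bm x)/(DL_\beta)$ that this root coincides with the angle $\gamma_\beta$ defined in the statement.

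The main obstacle is not the algebra but the bookkeeping about roots and angular ranges. I would need to confirm that, for $\bm x$ in the field of view, $h$ has a single simple zero in the $\gamma$-window so the delta-sum contributes one term, and---more delicately---that letting $\beta$ run over the full circle $[0,2\pi)$ reproduces the symmetry (\ref{Rsimmetry}) of the parallel transform without a spurious factor of two. The cleanest safeguard is to cross-check against the factored route $\RadonGadj = \Back\MGadj$: computing $\MGadj$ through the change of variables $t = D\sin\gamma$, $\theta = \beta+\gamma$ (with Jacobian $\dd\gamma\,\dd\beta = (D\cos\gamma)^{-1}\dd t\,\dd\theta$, i.e.\ the rebinning weight $1/\sqrt{D^2-t^2}$), applying the $[0,2\pi)$ form of $\Back$, and using $|\dd\theta/\dd\beta| = D\cos\gamma_\beta/L_\beta$ to return to the $1/L_\beta$ weight. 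Agreement of the two derivations fixes the constant unambiguously.
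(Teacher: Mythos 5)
Your proposal is correct and follows essentially the same route as the paper: a direct verification of the defining relation (\ref{adjoint}) by inserting the Dirac representation of $\Radon f$, applying Fubini, and collapsing the $\gamma$-integral against the delta so that the $1/L_\beta$ weight emerges as the Jacobian factor at the single root $\gamma=\gamma_\beta$. The only cosmetic difference is that the paper works in polar coordinates and imports the identity $\rho\cos(\beta+\gamma-\phi)=L_\beta\sin(\gamma_\beta-\gamma)$ to exhibit the delta as $\delta(L_\beta\sin(\gamma_\beta-\gamma))$, whereas you obtain the same weight by explicitly differentiating $h(\gamma)=D\sin\gamma-\bm x\cdot\bm\xi_{\beta+\gamma}$, and the single-root issue you flag is resolved exactly as the paper does, by restricting $\gamma$ to a window of length $\pi$.
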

%\begin{proof}
%A simple change of variables; see \cite{kak2001,deans2007radon}.
%\end{proof}
\iftrue 
\begin{proof} 
Let $f_\mathsf{P} \in U$ and $w \in \VG$. Using the polar representation (\ref{polar}) of $\Radon$ and the relationship 
\sloppy${\rho \cos(\beta+\gamma-\phi) = L_\beta \sin(\gamma_\beta - \gamma)}$ (see \cite{kak1988}), then
\begin{equation*} 
\begin{split}
	\langle \RadonG f_\mathsf{P} , w\rangle_{\VG} 
	& =  \int_0^{2\pi}\int_0^{\pi} \, w(\gamma, \beta)  \RadonG f_\mathsf{P} (\gamma, \beta) \, \dd \gamma \dd \beta \\
	%& = \int\limits_{[0,2\pi]\times[-\frac{\pi}{2},\frac{\pi}{2}]}  \dd \gamma \dd \beta \, g(\gamma, \beta)
	% \int\limits_{\R^2} \dd \bm x  f(\bm x) \delta(\bm x \cdot \xi_{\gamma+\beta} - D\sin\gamma) \\
	%& =  \int\limits_{[0,2\pi]\times[-\frac{\pi}{2},\frac{\pi}{2}]} \dd \gamma \dd \beta     
	%\int\limits_{[0,2\pi] \times \R^+} r \dd r \dd \phi \,  f(r, \phi) \delta(r \cos(\gamma + \beta + \phi) - D \sin\gamma) %g(\gamma, \beta) \\
	& =  \int_0^{2\pi}\int_0^{\pi} \, w(\gamma, \beta)  
	\int_0^{2\pi}\int_{\R_+} \, f_\mathsf{P}
	(\rho, \phi) \delta(L_\beta \sin(\gamma_\beta - \gamma)) 
	\, \rho \dd\rho \dd\phi
	\, \dd \gamma \dd \beta,
\end{split}
\end{equation*}
which, from Fubini's theorem, becomes  
\begin{equation*} 
\langle \RadonG f_\mathsf{P} , w\rangle_{\VG} 
= \int_0^{2\pi}\int_{\R_+} f(\rho, \phi)
\int_0^{2\pi}\int_0^{\pi} \,     
  \dfrac{1}{L_\beta} w(\gamma, \beta) \delta(\sin(\gamma_\beta - \gamma))
  \, \dd \gamma \dd \beta
  \, \rho \dd \rho \dd \phi.
\end{equation*}

Therefore, since  $ \delta(\sin(\gamma_\beta - \gamma)) =  \delta(\gamma_\beta - \gamma) $ for $\gamma \in [0,\pi[$ and a fixed $\gamma_\beta$, after performing the above $\gamma$-integration, $\RadonGadj$ defined in (\ref{eq:adjunta-G}) verifies (\ref{adjoint}).
\end{proof}
\fi

%
%%%%%%%%%%LEMMA 2
%
\begin{lemma} \label{theo:fanS}
	The operator $\RadonSadj\colon\VS\to U$, defined for $g \in \VS$ and $\bm x \in \R^2$ by 
	\begin{equation}\label{eq:adjunta-s}
		\RadonSadj g(\bm x) = \frac{1}{D}\int_0^{2\pi}\frac{1}{\ U_\beta} \, \sqrt{s_\beta^2 + D^2} \, g(s_\beta, \beta) \, \dd \beta, 
	\end{equation}
	is the adjoint of $\RadonS$ in the sense of (\ref{adjoint}). $U_\beta$ is the ratio of the scalar projection of $\bm r_\beta - \bm x$ on the central ray to the source-origin distance, and $s_\beta$ is the corresponding height $s$ for $\bm x$ at a source angle $\beta$, i.e., 
	\begin{equation*}
	U_\beta = \dfrac{D^2 - \bm r_\beta \cdot \bm x}{D^2}, \quad s_\beta = \dfrac{\bm x \cdot \bm \xi_\beta}{U_\beta}.
	\end{equation*}

	%If a polar representation of $\bm x = (\rho ,\phi)$ is considered, \sloppy${U_\beta = (D+r\sin(\beta-\phi))/D}$ and $s_\beta$ is the corresponding $s$-value for $\bm x$ at a source angle $\beta$, i.e., \sloppy${s_\beta = D r \cos(\beta-\phi) / (D + r\sin(\beta-\phi)).}$
	
\end{lemma}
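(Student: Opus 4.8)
The plan is to mirror the proof of Lemma~\ref{theo:fanG}, replacing the detector-angle integration by an integration in the signed height $s$. First I would expand $\langle \RadonS f_\mathsf{P}, g\rangle_{\VS}$ and substitute the polar representation (\ref{polar}) of the Radon transform, so that $\RadonS f_\mathsf{P}(s,\beta)$ carries the factor
\begin{equation*}
\delta\!\left(\frac{sD}{\sqrt{s^2+D^2}} - \rho\cos\!\Big(\beta + \arctan\tfrac{s}{D} - \phi\Big)\right)
\end{equation*}
under the $(\rho,\phi)$ integral. The key observation is that with $\gamma = \arctan(s/D)$ one has $sD/\sqrt{s^2+D^2} = D\sin\gamma$ and $\beta + \arctan(s/D) = \beta + \gamma$, so the argument of the Dirac delta is exactly the one appearing in Lemma~\ref{theo:fanG}; the same geometric relation from \cite{kak2001} therefore reduces it to $L_\beta\sin(\gamma_\beta - \gamma)$.

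Next I would re-express this zero in the variable $s$. Writing $s_\beta = D\tan\gamma_\beta$ and using $\sin\gamma = s/\sqrt{s^2+D^2}$, $\cos\gamma = D/\sqrt{s^2+D^2}$ together with the analogous expressions for $\gamma_\beta$, the addition formula gives
\begin{equation*}
\sin(\gamma_\beta - \gamma) = \frac{D\,(s_\beta - s)}{\sqrt{s_\beta^2+D^2}\,\sqrt{s^2+D^2}},
\end{equation*}
so the delta is a function of $s$ vanishing only at $s = s_\beta$. Applying the scaling property $\delta(h(s)) = \delta(s-s_\beta)/|h'(s_\beta)|$ and noting that at $s = s_\beta$ the factor $(s_\beta - s)$ vanishes, only the derivative of that factor survives, which yields $|h'(s_\beta)| = L_\beta D/(s_\beta^2+D^2)$. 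After invoking Fubini's theorem to bring the $(\rho,\phi)$ integral outside and performing the $s$-integration, the inner kernel collapses to $\frac{s_\beta^2+D^2}{L_\beta D}\, g(s_\beta,\beta)$.

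It then remains to match this weight with the one stated in (\ref{eq:adjunta-s}), and this is where the main work lies. I would establish the magnification identity $L_\beta = U_\beta\sqrt{s_\beta^2+D^2}$, namely that the source-to-point distance equals $U_\beta$ times the source-to-detector distance $\sqrt{s_\beta^2+D^2}$ along the ray; substituting $U_\beta = (D^2 - \bm r_\beta\cdot\bm x)/D^2$ and $s_\beta = (\bm x\cdot\bm\xi_\beta)/U_\beta$ into $L_\beta^2 = \|\bm r_\beta - \bm x\|_2^2$ and simplifying confirms it. With this identity, $\frac{s_\beta^2+D^2}{L_\beta D} = \frac{\sqrt{s_\beta^2+D^2}}{D\,U_\beta}$, so the inner product takes the form $\langle f_\mathsf{P}, \BackS g\rangle_U$ with $\BackS$ exactly as in (\ref{eq:adjunta-s}), establishing (\ref{adjoint}). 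The main obstacle is thus not the structure of the argument, which parallels Lemma~\ref{theo:fanG}, but the careful bookkeeping of the Dirac-delta Jacobian together with the verification of the magnification identity. An alternative route that sidesteps the Jacobian is to reuse Lemma~\ref{theo:fanG} directly through the detector reparametrization $s = D\tan\gamma$, for which $\dd s = \big((s^2+D^2)/D\big)\,\dd\gamma$.
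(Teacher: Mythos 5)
Your proposal is correct and follows essentially the same route the paper intends: the printed proof of Lemma~\ref{theo:fanS} is a one-line deferral to the method of Lemma~\ref{theo:fanG} (expand $\langle \RadonS f_\mathsf{P}, g\rangle_{\VS}$ via the representation of $\Radon$, rewrite the Dirac delta in the detector variable, apply Fubini, and read off the adjoint weight), and you simply supply the details it omits. Your Jacobian computation $|h'(s_\beta)| = L_\beta D/(s_\beta^2+D^2)$ and the magnification identity $L_\beta = U_\beta\sqrt{s_\beta^2+D^2}$ check out and reproduce exactly the weight in (\ref{eq:adjunta-s}).
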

\begin{proof}
As in the proof of Lemma \ref{theo:fanG}, this follows directly from (\ref{adjoint}) and the polar representation (\ref{polar}).
%A simple change of variables; see \cite{kak2001,deans2007radon}.
\end{proof}

The relationship between the adjoint transforms in (\ref{eq:adjunta-G}) and (\ref{eq:adjunta-s}) and the backprojection operators, denoted here by $\BackG$ and $\BackS$ respectively for the standard and linear geometry follows. The backprojection operators are applied e.g., on the FBP algorithm as in the parallel case after the filtering process \cite{kak1988}. Such operators are respectively defined as the weighted integrals 
\begin{equation*}\label{eq:back-G}
	\BackG w (\bm x)= \int_0^{2\pi}\dfrac{1}{L_\beta^2} \, w (\gamma_\beta, \beta) \, \dd \beta, \quad \BackS g(\bm x) = \int_0^{2\pi}\frac{1}{\ U_\beta^2} \, g(s_\beta, \beta) \, \dd \beta, 
\end{equation*}
therefore, the difference with the adjoint transforms lies in the weight factors.

\section{Two-step backprojection formulas}

The two-step adjoint formulations for each geometry, namely $\RadonG^* = \Radon^* \MG^*$ and $\RadonS^* = \Radon^* \MS^*$ derived from (\ref{eq:adjG}) will be detailed here. It is clear that the adjoint of the rebinning operators $\{\MG,\MS\}$ plays an important role in such formulations. 

\begin{lemma}[Adjoint of a rebinning operator]\label{adjoint_rebinning}
Let $\mathcal{M}: X \to Y$ be a bijective and differentiable with continuous inverse rebinning operator between two Hilbert spaces $X$ and $Y$. Its adjoint $\mathcal{M}^* \colon Y \to X$ is then given by $\mathcal{M}^* = \mathsf{J} \, \mathcal{M}^{-1}$, where $\mathsf{J}$ is the Jacobian determinant of the rebinning operation and $\mathcal{M}^{-1}$ its inverse rebinning.
\end{lemma}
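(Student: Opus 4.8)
The plan is to recognise that a rebinning operator is simply a composition (pullback) operator and that the whole statement is one application of the change-of-variables theorem inside the $L^2$ inner product. Write $\mathcal{M}p = p\circ\varphi$, where $\varphi\colon Y\to X$ is the underlying change of coordinates that defines the rebinning. The hypotheses — bijective, differentiable with continuous inverse — are exactly what is needed to make $\varphi$ a diffeomorphism (with nonvanishing Jacobian), and they immediately identify the inverse rebinning as $\mathcal{M}^{-1}q = q\circ\varphi^{-1}$, since $(p\circ\varphi)\circ\varphi^{-1}=p$. With this dictionary in hand the abstract operator identity becomes a bookkeeping statement about the Jacobian of $\varphi^{-1}$.

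Next I would expand the defining pairing and transfer the composition from $p$ onto the test function $q$ by substituting $x=\varphi(y)$, using $\dd y = \bigl|\det D\varphi^{-1}(x)\bigr|\,\dd x$:
\begin{equation*}
\langle \mathcal{M}p, q\rangle_Y = \int_Y p(\varphi(y))\, q(y)\,\dd y = \int_X p(x)\, q(\varphi^{-1}(x))\, \bigl|\det D\varphi^{-1}(x)\bigr|\,\dd x.
\end{equation*}
Identifying the right-hand side with $\langle p,\mathcal{M}^*q\rangle_X$ and using that $p$ is arbitrary forces
\begin{equation*}
\mathcal{M}^*q(x) = \bigl|\det D\varphi^{-1}(x)\bigr|\, q(\varphi^{-1}(x)) = \mathsf{J}(x)\,\mathcal{M}^{-1}q(x),
\end{equation*}
so that $\mathcal{M}^* = \mathsf{J}\,\mathcal{M}^{-1}$ with $\mathsf{J}=\bigl|\det D\varphi^{-1}\bigr|$ acting as a pointwise multiplier. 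This is precisely the announced factorisation, with $\mathsf{J}$ the Jacobian determinant of the inverse change of coordinates, equivalently the reciprocal of the Jacobian of the forward rebinning evaluated along $\varphi^{-1}$.

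Once the composition viewpoint is adopted the argument is routine; the only genuine care-points are (i) justifying the substitution, i.e.\ that $\varphi$ is a bona fide diffeomorphism with Jacobian nonvanishing almost everywhere, which is guaranteed by bijectivity together with differentiability and continuity of the inverse, and (ii) tracking the \emph{direction} of the Jacobian, since it is easy to produce $|\det D\varphi|$ instead of its reciprocal. I expect this orientation bookkeeping to be the main (though minor) obstacle. As a sanity check I would verify the exponent on the concrete rebinning $\MG$, whose coordinate map $\varphi(\gamma,\beta)=(D\sin\gamma,\beta+\gamma)$ has $\det D\varphi = D\cos\gamma$, so that the multiplier must be $\mathsf{J}=1/(D\cos\gamma)$; this is the weight that, once composed with $\Back$, should reproduce the geometric factors appearing in Lemmas \ref{theo:fanG} and \ref{theo:fanS}, confirming both the formula and the chosen orientation.
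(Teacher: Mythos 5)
Your proof is correct and follows essentially the same route as the paper, whose own proof is only a one-line sketch ("direct verification through an integral representation and a change of variables"); you simply carry out that computation explicitly. Your resolution of the orientation issue — $\mathsf{J}=\bigl|\det D\varphi^{-1}\bigr|$, i.e.\ the reciprocal of the forward Jacobian composed with $\varphi^{-1}$ — is the right one, as your check against $\MG$ confirms ($1/(D\cos\gamma(t))=1/\sqrt{D^2-t^2}=\JG(t)$, matching (\ref{changeof})).
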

\begin{proof}
If $u \in X$ and $v \in Y$, a direct verification of $\langle  \mathcal{M} u , v \rangle_{Y} = \langle  u , \mathcal{M}^*v \rangle_X$ through an integral representation and a change of variables will prove the result. Bijectivity of $ \mathcal{M}$ is needed to preserve the range of integration in both domains of $X$ and $Y$ and for the inverse to exist.
\end{proof}

\begin{lemma}
The adjoint operators of $\MG$ and $\MS$ are respectively defined for $w \in \VG$ and $g \in \VS$ by
\begin{equation}\label{msadj}
	\MGadj w(t, \theta) = w(\gamma(t), \theta - \gamma(t) ) \JG(t)  ,
\end{equation}
\begin{equation}\label{eq.Madj}
    \MS^*g(t, \theta) = g(s(t), \theta-\gamma(t) ) \JS(t), 
\end{equation}
where
\begin{equation}\label{changeof}
\gamma(t) = \arcsin \dfrac{t}{D}, \quad
s(t) = \dfrac{t D}{\sqrt{D^2 - t^2}}, \quad
\JG(t) = \dfrac{1}{\sqrt{D^2 - t^2}}, \quad 
\JS(t) = \dfrac{D^3}{(D^2 - t^2)^{3/2}}.
\end{equation}
\end{lemma}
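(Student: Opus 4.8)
The plan is to derive both formulas as direct applications of Lemma~\ref{adjoint_rebinning}, since each rebinning operator is a pure composition operator on the sinogram coordinates. Writing $\MG p = p\circ\Phi_{\mathsf s}$ and $\MS p = p\circ\Phi_{\ell}$, the underlying coordinate maps are $\Phi_{\mathsf s}(\gamma,\beta) = (D\sin\gamma,\ \beta+\gamma)$ and $\Phi_{\ell}(s,\beta) = \bigl(\tfrac{sD}{\sqrt{s^2+D^2}},\ \beta+\arctan\tfrac{s}{D}\bigr)$, both sending fan coordinates to the parallel pair $(t,\theta)$. By Lemma~\ref{adjoint_rebinning} the adjoints are $\MGadj = \JG\,\MG^{-1}$ and $\MS^* = \JS\,\MS^{-1}$, where $\MG^{-1},\MS^{-1}$ are composition with the inverse maps $\Phi_{\mathsf s}^{-1},\Phi_{\ell}^{-1}$ and the Jacobian factors are the determinants of those inverse maps. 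The whole task thus reduces to two elementary calculations: inverting $\Phi_{\mathsf s},\Phi_{\ell}$ and computing the relevant Jacobian determinants.

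First I would invert the coordinate maps. Solving $t = D\sin\gamma$ gives $\gamma(t) = \arcsin(t/D)$, and solving $t = sD/\sqrt{s^2+D^2}$ gives $s(t) = tD/\sqrt{D^2-t^2}$, both matching (\ref{changeof}). For the angular component I would check the key identity $\arctan\tfrac{s(t)}{D} = \arctan\tfrac{t}{\sqrt{D^2-t^2}} = \arcsin\tfrac{t}{D} = \gamma(t)$, so that in \emph{both} geometries the inverse map sends $(t,\theta)$ to fan coordinate $\gamma(t)$ (resp. $s(t)$) and source angle $\beta = \theta-\gamma(t)$. This pins down $\MG^{-1}w(t,\theta) = w(\gamma(t),\theta-\gamma(t))$ and $\MS^{-1}g(t,\theta) = g(s(t),\theta-\gamma(t))$, which are exactly the composition parts of (\ref{msadj}) and (\ref{eq.Madj}).

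Next I would compute the Jacobians. The cleanest route is to differentiate the forward maps, exploiting their triangular structure (the angular coordinate depends on $\beta$ with unit derivative), which gives $\det D\Phi_{\mathsf s} = D\cos\gamma$ and $\det D\Phi_{\ell} = D^3/(s^2+D^2)^{3/2}$, and then to take reciprocals evaluated at the inverse point. Using $\cos\gamma(t) = \sqrt{D^2-t^2}/D$ yields $\JG(t) = 1/\sqrt{D^2-t^2}$, and substituting $s(t)^2+D^2 = D^4/(D^2-t^2)$ yields $\JS(t) = D^3/(D^2-t^2)^{3/2}$, exactly as claimed. The one place where the argument is more than bookkeeping — and the main obstacle — is verifying the hypotheses of Lemma~\ref{adjoint_rebinning}: one must restrict $\gamma$ to $(-\pi/2,\pi/2)$, equivalently $t\in(-D,D)$, the range on which $\sin$ and $\arctan$ are genuine bijections so that $\arcsin$ inverts them, and check that the shift $\beta = \theta-\gamma(t)$ respects the $2\pi$-periodicity of the source angle, so that the change of variables is a domain-preserving diffeomorphism. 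With this in place, the factor $\mathsf J$ is unambiguously the determinant of the inverse map, and both formulas follow.
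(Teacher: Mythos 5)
Your proposal is correct and follows the same route as the paper, which simply invokes Lemma~\ref{adjoint_rebinning} after noting bijectivity and differentiability; you have merely carried out explicitly the coordinate inversions and Jacobian computations that the paper leaves implicit. All of your formulas (the identity $\arctan(s(t)/D)=\arcsin(t/D)$, the determinants $D\cos\gamma$ and $D^3/(s^2+D^2)^{3/2}$, and their reciprocals at the inverse point) check out.
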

%%%%%%%%%%%%%%%
\begin{proof}
As both operators are bijective \cite{natterer2001mathematical} and continuously differentiable, this is an immediate application of Lemma \ref{adjoint_rebinning}.
\end{proof}
%It is clear that both operators $\MG,\MS$ satisfies respectively
%\begin{equation} \label{eq:propM}
%    \MGadj = \JG \MG^{-1}, \ \ \ \MS^* = \JS \MS^{-1}.
%\end{equation}

The formal adjoints of $\RadonG$ and $\RadonS$ are presented in Lemmas \ref{theo:fanG} and \ref{theo:fanS}. The problem with 
these formulations is the difficulty for a low-cost implementation algorithm. To circumvent this problem, we use the fact that $\{\Radon, \Radon^*\}$ are bounded operators between Hilbert spaces $U$ and $V$, here considered to be $L^2$ spaces. Hence, we obtain the following result. 

\begin{theo}\label{theotwosteps}
Considering the two fan-beam geometries, the adjoint operators of $\RadonG$ and $\RadonS$, given explicitly by expressions (\ref{eq:adjunta-G}) and (\ref{eq:adjunta-s}), are also given respectively by
$\RadonGadj = \Radon^* \MG^*$ and $\RadonSadj = \Radon^* \MS^*$.
\end{theo}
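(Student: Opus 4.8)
The plan is to show that the two distinct expressions for each fan-beam adjoint coincide, by verifying that the composition $\Back \MG^*$ (respectively $\Back \MS^*$) reproduces the explicit integral formula of Lemma~\ref{theo:fanG} (respectively Lemma~\ref{theo:fanS}). The conceptual skeleton is already in place: equation~(\ref{eq:adjG}) establishes the chain $\langle \RadonG f, w\rangle = \langle f, \Back\MG^* w\rangle$, so uniqueness of the adjoint immediately gives $\RadonG^* = \Back\MG^*$. Since Lemma~\ref{theo:fanG} also shows that $\BackG$ satisfies~(\ref{adjoint}), uniqueness of the adjoint between the Hilbert spaces $U$ and $\VG$ forces $\BackG = \RadonG^* = \Back\MG^*$. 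The same abstract argument applies verbatim to the linear case. Thus the cleanest route is purely operator-theoretic: invoke the boundedness of $\Radon$ and $\Back$ together with the fact that $\MG^*$, $\MS^*$ are genuine adjoints of the rebinning operators (Lemma~\ref{adjoint_rebinning} and the subsequent lemma).

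First I would state explicitly that the adjoint of a bounded linear operator between Hilbert spaces is unique, so it suffices to exhibit \emph{any} operator satisfying~(\ref{adjoint}). Then I would write, for $f\in U$ and $w\in\VG$, the identity
\begin{equation*}
\langle \RadonG f, w\rangle_{\VG} = \langle \MG\Radon f, w\rangle_{\VG} = \langle \Radon f, \MG^* w\rangle_{V} = \langle f, \Back\,\MG^* w\rangle_{U},
\end{equation*}
where the second equality uses the lemma giving $\MG^*$ and the third uses $\Radon^*=\Back$. This shows $\Back\MG^*$ satisfies~(\ref{adjoint}) for $\RadonG$, hence equals $\RadonG^*$. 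Combined with Lemma~\ref{theo:fanG}, which verifies that $\BackG$ is \emph{also} an adjoint of $\RadonG$, uniqueness yields $\BackG=\Back\MG^*$. The linear case is identical with $\MS^*$ and Lemma~\ref{theo:fanS}.

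For completeness one should confirm the abstract identity agrees with the explicit formulas, i.e.\ that substituting $\MG^*$ from~(\ref{msadj}) into~(\ref{backparallel}) and applying the change of variables $t=D\sin\gamma$ reproduces~(\ref{eq:adjunta-G}); the Jacobian $\JG$ is exactly what converts the parallel backprojection measure $\dd\theta$ into the fan measure $\dd\beta$ and supplies the weight $1/L_\beta$, with the geometric relation $\cos\gamma_\beta = \tfrac{1}{DL_\beta}\,\bm r_\beta\cdot(\bm r_\beta-\bm x)$ identifying the sampled fan angle. The main obstacle I anticipate is not the operator-theoretic argument, which is essentially formal, but this bookkeeping verification: one must carefully track how the composition $\Back\MG^*$ evaluates $w$ at the argument $(\gamma(t),\theta-\gamma(t))$ under $t=\bm x\cdot\bm\xi_\theta$ and check that the resulting angular substitution and weight match $L_\beta$ and $\gamma_\beta$ of Lemma~\ref{theo:fanG}. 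Since the statement explicitly asserts the two formulas are \emph{equal}, the proof is incomplete without at least indicating that this consistency holds; however, it follows automatically from uniqueness of the adjoint, so a rigorous proof need only cite that uniqueness rather than grind through the substitution.
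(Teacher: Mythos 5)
Your proposal is correct and follows essentially the same route as the paper: the paper's proof likewise rests on the uniqueness of the adjoint for bounded operators between Hilbert spaces, with the inner-product chain already supplied by equation~(\ref{eq:adjG}) and the explicit formulas certified as adjoints by Lemmas~\ref{theo:fanG} and~\ref{theo:fanS}. Your additional remark that the direct substitution check is subsumed by uniqueness is exactly the stance the paper takes, so no further bookkeeping is required.
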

\begin{proof}
This is an immediate consequence of the uniqueness of the adjoint for bounded operators on Hilbert spaces \cite{rudin1973}. In fact, since $\Radon^*$ and $\MG^*$ are bounded, the composition is also bounded. The same applies for $\MS^*$. 
\end{proof}

To conclude this section, a straightforward two-step rebinning formulation for the backprojection operators is provided here as $\BackG = \Back \MG^{-1}$ and $\BackS = \Back \MS^{-1}$ having an evident justification. The difference mainly lies on the Jacobian determinants $\JG$ and $\JS$ as we can write $\MG^* = \MG^{-1}\JG$ and $\MS^* = \MS^{-1}\JS$. From the adjoint transform expressed as the two-step operation in Theorem \ref{theotwosteps}, we derive our main result in the following section. 

%% MAINMAINMAIN
%

\section{Main backprojection theorems}
\subsection{Standard geometry}

The main result of this work is based on a Fourier approach for equations (\ref{twosteps}) and then we make use of the BST formula (\ref{bst}) and the identity $\Radon^* \equiv 2\Back$ for a fixed $\theta$ and $\sigma>0$ in the form 
\begin{equation}\label{idea}
    \widehat{\RadonG^* w}(\sigma\bm \xi_\theta) = \widehat{\Radon^* \MG^* w}(\sigma, \theta) = \dfrac{2}{\sigma}\widehat{\MG^* w}(\sigma, \theta). 
\end{equation}

In \cite[eq.~2.5.6]{katsevich}, a formula relating a parallel sinogram and its backprojection is obtained, similarly to the BST, with a kernel $1/\sigma^{n-1}$ for an arbitrary dimension $n$, also in the frequency domain. This formulation requests however, contrary to the BST, the sinogram $p$ to be the in the range of the Radon operator $\Radon$, i.e., it should exist some $f \in U$ such that $p = \Radon f$. This is not necessarily true in the derivation of  (\ref{idea}), but on the other hand we can safely apply the BST here.

We start announcing our Fourier-based fan-beam adjoint theorem for standard geometry, and then we show how it is easily adapted to the linear case. From (\ref{idea}), we observe the need of computing $\widehat{\MG^*w}$ that is written as a series representation in the following Lemma.

\begin{lemma}\label{serieslema}
Given a standard fan-beam sinogram $w \in \VG$, first define $z \in \VG$ by $z(\gamma,\theta) = w(\gamma,\theta-\gamma)$ for all $(\gamma,\theta) \in \PIquotient \times \quotient$. As $z$ is $2\pi$-periodic with respect to $\gamma$ we can write its Fourier series expansion with the Fourier coefficients 
\begin{equation*}\label{cn}
    c_n(\theta) = \frac{1}{2\pi} \int_0^{2\pi} z(\gamma,\theta) e^{-i n \gamma} \, \dd \gamma,
\end{equation*}
from where we define the coefficients 
\begin{equation}\label{bn}
b_n = 2\pi[c_n + (-1)^n\bar{c_{n}}] \quad \text{for} \ n\geq 1, \quad \text{and} \quad b_0 = 2\pi c_0.
\end{equation}

Then we have a Bessel-Neumann (BN) series description of $\widehat{\MGadj w}$ as
\begin{equation*}\label{eq:BesselSeries}
    \widehat{\MGadj w}(\sigma,\theta) = \sum_{n=0}^\infty b_n(\theta) J_n(D\sigma),
\end{equation*}
where $(J_n)$ is a sequence of Bessel functions of the first kind. 
\end{lemma}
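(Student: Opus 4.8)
The plan is to insert the explicit form of the rebinning adjoint and to compute the one–dimensional Fourier transform in $t$ head-on. First I would substitute (\ref{msadj}) into the definition of $\widehat{\MGadj w}(\sigma,\theta)$. Since $\MGadj w(t,\theta)=w(\gamma(t),\theta-\gamma(t))\,\JG(t)$ and $Z(\gamma,\theta)=w(\gamma,\theta-\gamma)$, the integrand is exactly $Z(\gamma(t),\theta)\,\JG(t)\,e^{-i\sigma t}$ with $\gamma(t)=\arcsin(t/D)$ and $\JG(t)=1/\sqrt{D^2-t^2}$ taken from (\ref{changeof}). Because $\gamma(t)$ is real only for $|t|\le D$, the transform reduces to an integral over the finite interval $t\in[-D,D]$.

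Next I would change variables $t=D\sin\gamma$, which is the decisive simplification: $\dd t=D\cos\gamma\,\dd\gamma$ cancels the Jacobian weight $\JG(t)=1/(D\cos\gamma)$ exactly, leaving the clean oscillatory integral of $Z(\gamma,\theta)\,e^{-i\sigma D\sin\gamma}$ over $\gamma\in(-\pi/2,\pi/2)$. The key structural observation is that $Z(\gamma,\theta)=\Radon f(D\sin\gamma,\theta)$, so $Z$ depends on $\gamma$ only through $\sin\gamma$; combined with (\ref{Rsimmetry}) (equivalently the standard symmetry (\ref{symmetry})) this yields $Z(\pi-\gamma,\theta)=Z(\gamma,\theta)$ and confirms that $Z$ is genuinely $2\pi$-periodic in $\gamma$. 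I would use this even symmetry about $\gamma=\pi/2$ to fold the half-period integral over $(-\pi/2,\pi/2)$ onto the full period $[0,2\pi)$, which is precisely what makes the full-period Fourier coefficients $c_n(\theta)$ of (\ref{cn}) appear.

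Finally I would insert the Jacobi--Anger expansion $e^{-iz\sin\gamma}=\sum_{n=-\infty}^{\infty}J_n(z)\,e^{-in\gamma}$ with $z=D\sigma$, interchange sum and integral (justified by the uniform convergence of the Jacobi--Anger series on compacta together with the integrability of $Z$), and integrate term by term; each term reproduces a coefficient $c_n(\theta)$, giving a two-sided series $\sum_{n\in\mathbb{Z}}c_n(\theta)J_n(D\sigma)$. Collapsing the negative indices onto the nonnegative ones via $J_{-n}=(-1)^nJ_n$ and the reality relation $c_{-n}=\overline{c_n}$ produces exactly the combination $c_n+(-1)^n\overline{c_n}$, i.e. the coefficients $b_n$ of (\ref{bn}), and hence the Bessel--Neumann series (\ref{eq:BesselSeries}).

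I expect the main obstacle to be the range bookkeeping in the middle step: the $\arcsin$ confines the integral to $\gamma\in(-\pi/2,\pi/2)$, whereas the $c_n$ are full-period coefficients, so the even symmetry $Z(\pi-\gamma,\theta)=Z(\gamma,\theta)$ must be tracked carefully when passing from the half range to the full circle, both to avoid double counting and to pin down the numerical prefactor. Rigorously justifying the term-by-term integration of the Jacobi--Anger series is the remaining, more routine, technical point.
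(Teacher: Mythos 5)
Your proposal follows essentially the same route as the paper's proof: substitute (\ref{msadj}) and (\ref{changeof}), change variables $t=D\sin\gamma$ so that the Jacobian weight cancels, expand in the angular Fourier series (your Jacobi--Anger step is the same identity as the paper's use of the integral representation $\int_0^{2\pi}e^{i(n\gamma-x\sin\gamma)}\,\dd\gamma=2\pi J_n(x)$), and fold the negative indices using $J_{-n}=(-1)^nJ_n$ together with $c_{-n}=\bar{c}_n$. The half-range bookkeeping you flag is a genuine subtlety --- the paper passes from $\int_\R\,\dd t$ to $\int_0^{2\pi}\,\dd\gamma$ without comment, which is precisely where the symmetry $Z(\pi-\gamma,\theta)=Z(\gamma,\theta)$ and the attendant factor of $2$ must be tracked --- but this affects only the normalization of the $b_n$, not the structure of the argument.
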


\begin{proof}
Given $w \in \VG$, from (\ref{msadj}) and (\ref{changeof}) the 1D Fourier transform of $\MGadj w(t,\theta)$ with respect to $t$ is
\begin{equation} \label{eq:FadjG}
\begin{split}
\widehat{\MGadj w}(\sigma,\theta) & = 
\int_\R  w( \gamma(t), \theta - \gamma(t) ) e^{-it\sigma} \JG(t) \, \dd t \\
& = \int_0^{2\pi} w(\gamma, \theta - \gamma ) e^{-i D \sigma \sin \gamma}
\, \dd \gamma = \int_0^{2\pi} z(\gamma, \theta) e^{-i D \sigma \sin \gamma}
\, \dd \gamma.
\end{split}
\end{equation}

After expanding $Z$ with $c_n$ we have
\begin{equation*}
\begin{split}
    \widehat{\MGadj w}(\sigma,\theta) & =  \sum_n c_n(\theta) \int_0^{2\pi} e^{i[n\gamma - D\sigma \sin\gamma]} \, \dd \gamma \\
    & = 2\pi \sum_n c_n(\theta) J_n(D\sigma), 
\end{split}
\end{equation*}
from where the result claims using (\ref{bn}) and the property $J_{-n}(x) = (-1)^n J_n(x)$.
\end{proof}

We can now state our fan-beam adjoint theorem for standard geometries.
\begin{theo}[Standard fan-beam adjoint Theorem]\label{teostandard}
The adjoint operator ${\RadonG^* \colon \VG \to U}$ acting on $w \in \VG$ can be expressed in the Fourier domain as the series 
\begin{equation} \label{eq:main}
    \widehat{\RadonG^* w}(\sigma\bm \xi_\theta) = \frac{2}{\sigma} \sum_{n=0}^\infty b_n(\theta) J_n(D\sigma), 
\end{equation}
where $(\sigma,\theta)\in \R_+ \times \quotient$ and the coefficients $b_n$ are computed as in Lemma \ref{serieslema}.
\end{theo}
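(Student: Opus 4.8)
The plan is to derive (\ref{eq:main}) by composition, since all the analytic work has been front-loaded into the preceding results: the factorization $\BackG = \Back\,\MGadj$ of Theorem \ref{theotwosteps}, the backprojection slice theorem (\ref{bst}), and the Bessel--Neumann expansion of Lemma \ref{serieslema}. There is essentially no new computation to perform; the task is to assemble these three ingredients in the right order and to check that the intermediate object feeds validly into the next step.

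First I would set $p = \MGadj w$ and regard it as an element of the parallel sinogram space $V$, so that the \textsc{bst} identity (\ref{bst}) is applicable to it. Taking the two-dimensional Fourier transform of $\BackG w = \Back p$ and restricting to the radial line $\sigma\bm\xi_\theta$ reproduces (\ref{idea}),
\begin{equation*}
\widehat{\BackG w}(\sigma\bm\xi_\theta) = \widehat{\Back p}(\sigma\bm\xi_\theta) = \frac{1}{\sigma}\,\hat{p}(\sigma,\theta) = \frac{1}{\sigma}\,\widehat{\MGadj w}(\sigma,\theta),
\end{equation*}
understood for $\sigma\in\R_+$, consistent with the stated domain $(\sigma,\theta)\in\R_+\times\quotient$ (the restriction $\sigma>0$ is exactly the point where $1/\sigma$ is defined).

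Second, I would insert the series representation (\ref{eq:BesselSeries}) of $\widehat{\MGadj w}$ from Lemma \ref{serieslema} directly into the previous line, obtaining
\begin{equation*}
\widehat{\BackG w}(\sigma\bm\xi_\theta) = \frac{1}{\sigma}\sum_{n=0}^{\infty} b_n(\theta)\,J_n(D\sigma),
\end{equation*}
which is exactly (\ref{eq:main}). The Jacobi--Anger expansion and the folding of the Fourier coefficients $c_n$ into the $b_n$ were already carried out inside the lemma, so nothing remains to be simplified at this stage.

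The one point genuinely worth verifying --- and what I expect to be the main obstacle --- is the applicability of (\ref{bst}) to $p=\MGadj w$, i.e.\ that the rebinning adjoint lands in the class for which the backprojection slice theorem was established. By (\ref{msadj})--(\ref{changeof}), $\MGadj w$ is supported in $|t|\le D$ and carries the weight $\JG(t)=1/\sqrt{D^2-t^2}$, which is singular at the endpoints $t=\pm D$. The substitution $t=D\sin\gamma$ in (\ref{eq:FadjG}) absorbs precisely this singularity, so the relevant one-dimensional transform is a finite, smooth integral and the regularity obstruction dissolves; making this mapping property explicit is the only step that requires care, after which the chain of equalities above completes the proof.
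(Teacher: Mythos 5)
Your argument is exactly the paper's: the proof given there is the one-line observation that the theorem follows immediately from (\ref{idea}) and Lemma \ref{serieslema}, which is precisely the chain $\widehat{\BackG w}(\sigma\bm\xi_\theta)=\sigma^{-1}\widehat{\MGadj w}(\sigma,\theta)=\sigma^{-1}\sum_n b_n(\theta)J_n(D\sigma)$ you assemble. Your additional remark on the admissibility of $p=\MGadj w$ (compact support in $|t|\le D$ with the endpoint singularity absorbed by the substitution $t=D\sin\gamma$) is a sensible extra check but does not change the route.
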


\begin{proof}
This is an immediate consequence of (\ref{idea}) and Lemma \ref{serieslema}. 
\end{proof}

\begin{remark}\label{remback}
The backprojection operator $\BackG$ can also be written as a BN series. In fact,  from $\BackG = \Back \MG^{-1}$ we have 
\begin{equation*}
\widehat{\BackG w}(\sigma\bm \xi_\theta) = \widehat{\Back \MG^{-1} w}(\sigma, \theta) = \dfrac{1}{\sigma}\widehat{\MG^{-1} w}(\sigma, \theta). 
\end{equation*}

Then, the Jacobian determinant $\JG$ in the derivation (\ref{eq:FadjG}) needs to be compensated. Due to $\dd t = D \cos\gamma \,  \dd \gamma$ in (\ref{eq:FadjG}), we define $\dot z = \mathcal{A}w$ by 
\begin{equation}\label{A}
    \mathcal{A}w(\gamma, \theta) = D\cos\gamma \, w(\gamma, \theta - \gamma),
\end{equation}
and then we compute the Fourier coefficients $(\dot b_n)$ from $\dot z$ in the same way as in (\ref{bn}). Following the same derivation of Theorem \ref{teostandard}, we have the representation
\begin{equation} \label{eq:main_backG}
    \widehat{\BackG w}(\sigma\bm \xi_\theta) = \frac{1}{\sigma} \sum_{n=0}^\infty \dot b_n(\theta) J_n(D\sigma).
\end{equation}

Moreover, from the operation $\beta = \theta - \gamma$ we note that to perform the backprojection with the BN series, we only need a short-scan sinogram where $\beta \in [0, \pi + 2\bar \gamma[$, with $\bar \gamma = \arcsin{\bar x/D}$ the upper bound of $\gamma$ such that the beam intersects the domain of the phantom bounded by $\norm{\bm x}_2 \leq\bar x$. 

\end{remark}
\begin{remark}
The backprojected image obtained with BN series (\ref{eq:main_backG}) assumes a null DC component due to the kernel $1/\sigma$. However, (\ref{eq:main_backG}) can still be used to reconstruct an arbitrary object $f$ with the correct DC component $\kappa$
%from the filtered fan-beam sinogram $\tilde w$ 
by estimating it using the volume conservation property of the Radon transform \cite{natterer2001mathematical} that states
\begin{equation*} 
\kappa 
= \int\limits_{\R^2} f(\bm x) \, \dd \bm x 
= \int\limits_\R \Radon f(t, \theta) \, \dd t, \quad \forall \theta.   
\end{equation*}
\end{remark}
Then, using the rebinning operator $\MG$, $\kappa$ can be estimated from fan-beam projections $\RadonG f$ by
\begin{equation*}
\kappa 
= \int\limits_\R \Radon f(t, \theta) \, \dd t 
= \int\limits_\R \MG^{-1}\RadonG f(t, \theta) \, \dd t, \quad \forall \theta.
\end{equation*}

The last integral is computed with a fixed $\theta$ but an average over $\theta$ is also possible giving a more robust $\kappa$ with e.g., noisy projections.

\begin{remark}
Convergence of the BN series (\ref{eq:main_backG}) is guaranteed due to the fact that $J_n(x)$ has a pointwise convergence to $0$ since $|J_n(x)| \leq  |\frac{1}{2}x|^n / n!$ for all $x \in \mathbb R_+$ \cite[eq.~9.1.62]{Abramowitz}.
\end{remark}

\subsection{Linear geometry}

The adjoint operator for the linear fan-beam transform follows. Switching between fan-beam geometries requires only a one-dimensional interpolation on the first variable. In fact, taking $g \in \VS$, the rebinning operator on the first variable $\mathcal L \colon \VS \to \VG$ acting as 
\begin{equation} \label{eq:Ladj0}
\mathcal L g(\gamma, \beta) = 
\left\{\begin{array}{ll}
    g( D\tan \gamma, \beta),& \!\! \text{if} \ \gamma \neq \pi/2\!\!\! \mod \pi\\
    0,& \!\! \text{otherwise}  
\end{array} \right. 
\ \iff \ \RadonG = \mathcal L \RadonS,
\end{equation} 
provides a sinogram $w$ on the space $\VG$. Such bijective operator, according to Lemma \ref{adjoint_rebinning}, has an adjoint operator given by 
\begin{equation} \label{eq:Ladj}
    \mathcal L^* w(s,\beta) =  \mathsf{J}(s) \mathcal L^{-1} w(s,\beta), \quad
\end{equation}
where
\begin{equation*}
    \mathsf{J}(s) = \frac{D}{D^2 + s^2}, \quad
    \mathcal L^{-1} w(s,\beta) = w(\arctan \frac{s}{D},\beta).
\end{equation*}

The following Theorem enables us to provide a adjoint algorithm for the linear fan-beam geometry.

\begin{figure}%[t]
\centering
\begin{tikzpicture}
    % draw the sets
    %\filldraw[fill=blue!20, draw=blue!60] (-1.5,0) circle (1cm);
    %\filldraw[fill=red!20, draw=red!60] (1.5,0) circle (1cm);
    \filldraw [fill=white, draw=black!60](-1.5,0) circle (1cm);
    \filldraw [fill=white, draw=black!60, dashed] (1.5,0) circle (1cm);
    \filldraw [fill=white, draw=black!60] (5.0,1.2) circle (1cm);
    \filldraw [fill=white, draw=black!60] (5.0,-1.6) circle (1cm);
    % the points in the sets (here I just create nodes to use them later on to position
    % the circles and the arrows
    \node (U) at (-2.1, 0.4) {$U$};
    \node (V) at ( 2.1, 0.4) {$V$};
    \node (V) at ( 5.7, 1.4) {$\VG$};
    \node (V) at ( 5.7, -1.4) {$\VS$};
    
    %\node (f) at (-1.8, 0.4) {${f}$};
    \node (b) at (-1.6,-0.0) {${b}$};
    %\node (p) at ( 1.6, 0.0) {${p}$}; 
    \node (g) at ( 5.0, -1.4) {${g}$};
    \node (w) at ( 5.0, 0.6) {${w}$}; 
    %\node (z) at ( 4.1, 1.3) {${z}$}; 
    \node (Z) at ( 5.0, 1.8) {${\dot z}$}; 

    % draw the arrows
    %\draw[->] (z) -- node[above] {~~$\!\!\! \mathcal A$} (Z);
    %\draw[->] (f) -- node[above] {~~$\Radon$} (p);
    %\draw[->] (p) -- node[below] {~~$\MS$} (g);
    %\draw[->] (p) -- node[below] {~~$\Back$} (b);
    \draw[->] (g) [out=-160, in=-40] to node[above]{$\BackS$} (b);
    %\draw[->] (q) [out=110, in=50] to node[above]{$\BackG$} (b);
    \draw[->] (Z) [in=40, out=170] to node[above]{$\BackG$} (b);
    \draw[->] (w) -- node[left] {$\mathcal A$} (Z);  
    \draw[->] (g) -- node[left] {$\mathcal L$} (w);  
    %\draw[->] (g) -- node[below] {$\!\!\!\!\!\!\!\!\! ({\mathcal L^*})^{-1}$} (z);    
    %\draw[->] (w) -- node[below] {$\!\!\! w\tau$} (z);    
\end{tikzpicture}
%\vspace{-40pt} 
\caption{Diagram for the backprojection operators $\{\BackG, \BackS\}$ (without rebinning to the space $V$) through the action of operators $\{\mathcal L, \mathcal A\}$. See text for details.}
\label{fig:algorithm}
\end{figure}

\begin{theo} \label{theo:fback}
The adjoint operators $\RadonG^*$ and $\RadonS^*$ are related, for all $g \in \VS$ by 
\begin{equation}\label{eq:back_adjs}
    \RadonS^* g = \RadonG^* \tau \mathcal L g, 
\end{equation}
where $\tau(\gamma) = D \sec^2 \gamma$.
\end{theo}
\begin{proof}
%Starting with $\BackG = \RadonGadj$ and 
From (\ref{eq:Ladj0}) we obtain
$
\RadonGadj = (\mathcal L\RadonS)^* = \RadonS^* \mathcal L^*, % = \BackS \mathcal L^*,
$
from where $\RadonSadj = \RadonGadj (\mathcal L^*)^{-1}$ follows. Using (\ref{eq:Ladj}) and the fact that 
$(\mathcal L^*)^{-1} = \mathsf{J}( s(\gamma))^{-1} \mathcal L$ with $s(\gamma) = D\tan\gamma$, equation (\ref{eq:back_adjs}) is obtained.
\end{proof}

%Now, using the fact that $\BackG$ and $\Back$ are related (from Theorem \ref{theo:fanS}),
Therefore, our resulting formulation for the linear fan-beam adjoint transform is obtained by the following construction.
\begin{theo}[Linear fan-beam adjoint Theorem]\label{teolinear}
The operator $\RadonSadj \colon \VG \to U $ acts on $g \in \VS$ as the following two-steps operation. 
\begin{enumerate}
    \item[(i)] Let $\dot w(\gamma,\beta) = \tau(\gamma) w(\gamma,\beta)$ with $w = \mathcal L g$ be in $\VG$. %From $z$ we compute $Z=\mathcal Az$ defined as $\mathcal Az(\gamma,\theta) = z(\gamma,\theta-\gamma)$.
    \item[(ii)]  The adjoint operator is obtained as the BN series (\ref{eq:main}) where the coefficients $(b_n)$ in (\ref{bn}) are obtained from $\dot z(\gamma,\theta) = \dot w(\gamma,\theta-\gamma)$.
\end{enumerate}
\end{theo}
%% IMPORTANT !!!!  COLOCAR BIEN
%From (\ref{eq:FadjS}) and (\ref{eq:FadjG}) we note that, for a fixed $\theta$, it is difficult to compute numerically the sinogram $g(v,\theta - \arcsin\frac{v}{D}) \in \VS$ due to the nonlinear effect provided by the inverse trigonometric function. On the other hand, it is easier to compute the sinogram $Z(v,\theta) = w(v,\theta-v)\in \VG$ due to the linear factor $\theta - v$ without a significance loss on the sinogram angular resolution. 
\begin{proof}
Each step is justified respectively by Theorems \ref{theo:fback} and \ref{teostandard}.
\end{proof}
\medskip

Finally, the backprojection operator $\BackS$ can also be computed in the same way as in Remark \ref{remback}, by first writing $\BackS g = \BackG \mathcal L g$ and then computing $\BackG$ using (\ref{eq:main_backG}).

Figure \ref{fig:algorithm} illustrates the action of our method, where a backprojection $b\in U$ is obtained in two-steps. The unknown backprojection $\BackS g$ is obtained in the frequency domain from (\ref{eq:main}), using polar coordinates by two main operations to obtain sinogram $\dot z\in \VG$ through $\mathcal L$ defined in (\ref{eq:Ladj0}) (interpolation on the first variable) and $\mathcal A$ defined in (\ref{A}) (interpolating on the second variable). 

\subsection{Mapping to Cartesian coordinates}\label{interp}

A backprojected fan-beam sinogram using Theorems \ref{teostandard} or \ref{teolinear} respectively for standard and linear geometry is obtained in polar coordinates $(\sigma,\theta)\in \R_+ \times \quotient$ in the Fourier domain that needs to be mapped to Cartesian coordinates before applying the inverse Fourier transform. This problem is also present in parallel geometry if using the Fourier reconstruction method based on the FST \cite{natterer2001mathematical} or using the BST approach \cite{miqueles2018backprojection}. The evident strategy is a straightforward (linear) interpolation with the expense of important errors at higher frequencies. 

An error estimate related to this polar-to-Cartesian mapping for the parallel case using the FST is provided in \cite[sec.~V.2]{natterer1986mathematics}. If $\tilde f$ is the reconstruction of $f$ using the Fourier method, the error $\| f - \tilde f\|_{L_2}$ related to the mentioned mapping can be bounded by $c\|f\|_\mathcal{H}$ with some constant $c>0$ and a convenient norm on a Sobolev space $\mathcal{H}$. If a parallel sinogram $g(t,\theta)$ is backprojected within this error bound, then (not conversely, as explained below) it verifies the optimal sampling conditions $\{\Delta t \leq \pi/\Omega, \Delta\theta \leq\pi/(\rho\Omega) \}$ for an object with support included in the disk of radius $\rho$ and $\Omega$ the highest frequency following the Nyquist sampling criterion. Equivalent sampling conditions are also available for fan-beam sinograms in \cite[sec.~IV.3]{natterer2001mathematical}  as $\{\Delta\gamma \leq \pi/(D\Omega), \Delta\beta \leq \pi(D+\rho)/(\rho D\Omega) \}$ in standard geometry and the obvious equivalent in the linear case. It is worth to mention that the highest frequency $\Omega$ is obtained in such references with the Fourier Transform defined with the normalization as in (\ref{cn}).
Conversely, it is also shown in \cite{natterer1986mathematics} that the optimal angular sampling is enough to guarantee the error bound but not the radial sampling in polar frequencies; this can be solved by oversampling by $2$ the Fourier domain with zero-padding. In our case, this is largely verified as the oversampling is performed anyway by a factor at least $2$ until convergence of the BN series. Under these conditions, linear interpolation in the Fourier domain is enough within classical interpolations methods as it will be shown in Figure \ref{fig:mse}\textbf{\textrm{(a)}} where linear interpolation is compared with quadratic splines where results presented negligible differences. In the parallel case, the BST approach is shown to reach competitive results to other $O(N^2\log N)$ methods and overcome them in some scenarios with only linear interpolation from polar to Cartesian coordinates and an adequate zero-padding \cite{miqueles2018backprojection}.

More importantly, advanced polar-to-Cartesian mapping strategies in the context of the Fourier method with FST have shown to compete or even overcome FBP strategies while still presenting the lower $O(N^2\log N)$ complexity. For example, a gridding strategy \cite[sec.~V.2.2]{natterer2001mathematical} can be used by weighting the polar data with an adequate function and then the mapping to Cartesian coordinates is performed by convolution with such weight function. This method handles better interpolations errors than polynomial-based interpolation and reach equivalent results as FBP \cite{marone}. Although no error bounds are presented in such work, the claim is proved by numerical results.  The gridding algorithm is also implemented in the academic reconstruction library \texttt{tomopy} and applied to reconstruct synchrotron tomographic data in different synchrotron facilities \cite{tomopy, marone} instead of FBP. Finally, another strategy is to regularize the passage to Cartesian coordinates using total variation regularization based on the approach developed in \cite{tvct} for parallel tomography with FST, still with an $O(N^2\log N)$ complexity. This technique is motivated by compressed sensing and sparsity priors successfully developed for magnetic resonance imaging (MRI) reconstructions where there is also Fourier data on polar coordinates and it is shown that even with strongly undersampled Fourier data, significantly less polar samples than the optimal conditions given above, high quality reconstructions can be obtained \cite{tao2004}. These two mentioned strategies are available to be used directly on our reconstruction method for fan-beam tomography since the BN series provides polar data on the Fourier domain of the feature object as FST/BST for parallel tomography or MRI does.  % From a given sinogram in polar coordinates in the Fourien domain, 

\section{Numerical results}

Simulations are concentrated on the linear case since the standard case is one rebinning operation less than the linear, so results would be slightly better. The Shepp-Logan phantom in Figure \ref{phantom}\textbf{\textrm{(a)}} is employed. We first computed the parallel Radon transform (\ref{radon}) to then obtain a linear fan-beam sinogram through operation $\MS$ defined in (\ref{rebinning}).

\paragraph*{Discrete configuration.}
The domain of the feature phantom is the unit disk $\norm{\bm x}_2 \leq1$ where we have a $N\times N$ uniformly sampled quadrilateral mesh domain. For the domain of $U$ to compute (\ref{radon}), we have $N \times N_\theta$ points uniformly spaced with $(t, \theta) \in [-1,1]\times[0,\pi]$.

For the linear fan-beam geometry, we consider the distance source-origin $D=8$, then $\bar s=D(D^2-1)^{-1/2}$ is the highest detector position that measures the sample on the unit disk subtending an angle $\bar \gamma = \arcsin{1/D}$. We recall that a virtual detector is supposed to be centered, or scaled, at the origin. Due to the symmetry property (\ref{symmetry}) and to Remark \ref{remback}, $(s, \beta)$ needs to cover the sets $[-\bar s, \bar s] \times [0, \pi + 2\bar \gamma[$. For simplicity, we sampled these sets as in the parallel geometry uniformly with $N$ and $N_\theta$ points. Finally, here we will use $N = N_\theta = 512$.

\begin{figure}
    \centering
    \subfloat[]{\includegraphics[width=0.34\textwidth]{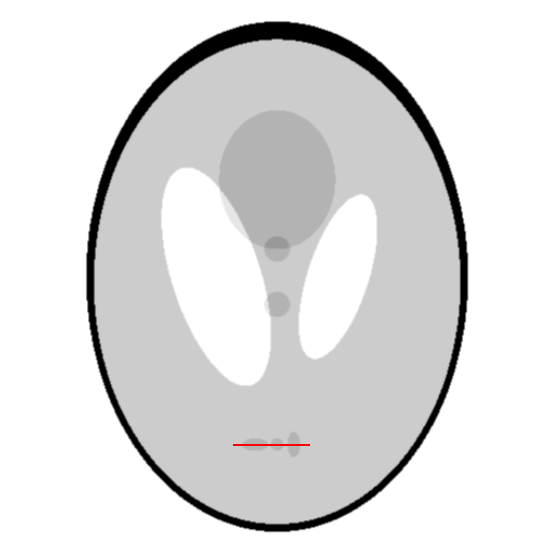}}
    \subfloat[]{\includegraphics[width=0.34\textwidth]{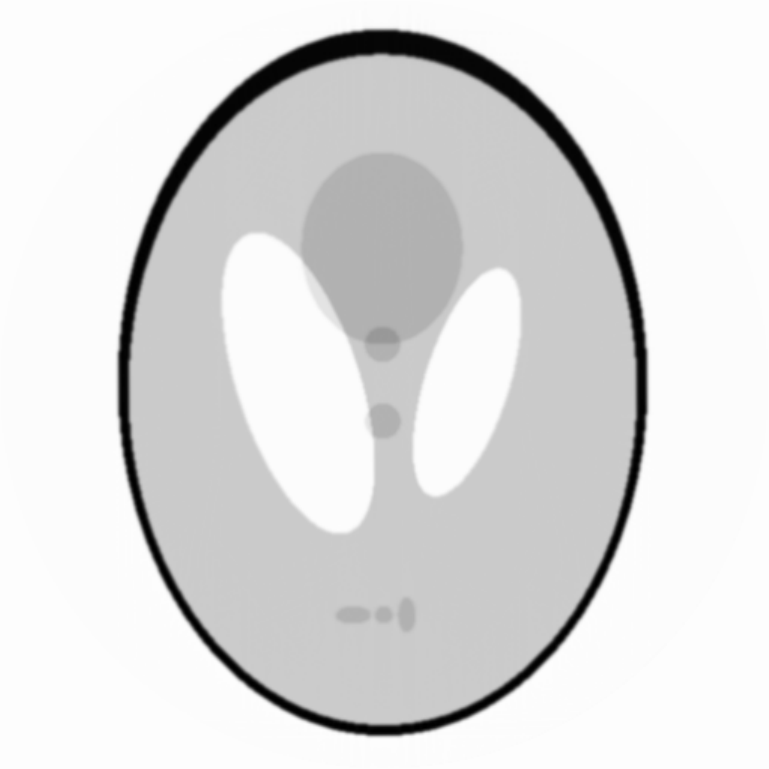}}
    \caption{Feature phantom \textbf{\textrm{(a)}} and reconstruction \textbf{\textrm{(b)}} using the BN series of Theorem \ref{teolinear}.}
    \label{phantom}
\vspace{10pt}
\begin{tabular}{ll}
	\begin{tikzpicture}
	%\pgfplotsset{ticks=none}
	\begin{axis}[xlabel={\small $x$}, ylabel={\small $\BackS (g\star\text{ramp})$}, legend style={at={(0,1)},anchor=south west, font=\footnotesize}, width = .51\textwidth, y label style={at={(axis description cs:.0,.5)},anchor=north}]
	\addplot[color=black] table [x expr=\coordindex/512*2-1, y index=0] {i256.txt}; %densely dotted, densely dashed
	\addplot[color=blue] table [x expr=\coordindex/512*2-1, y index=0] {d8ln256.txt};
	%\addplot[color=red] table [x expr=\coordindex/512*2-1, y index=0] {besFilt.txt};
    \legend{phantom, BN series}
	\end{axis}
	\end{tikzpicture} & % \\
	\begin{tikzpicture}
	%\pgfplotsset{ticks=none}
	\begin{axis}[xlabel={\small $x$}, legend style={at={(0,1)},anchor=south west, font=\footnotesize}, width = .51\textwidth]
	\addplot[color=black] table [x expr=\coordindex/70*0.28-0.16, y index=0] {crop_i.txt}; %densely dotted, densely dashed
	\addplot[color=blue, mark=*, mark size=1pt] table [x expr=\coordindex/70*0.28-0.16, y index=0] {crop_b.txt};
	\addplot[color=brown, mark=*, mark size=1pt] table [x expr=\coordindex/70*0.28-0.16, y index=0] {crop_brer.txt};
    \legend{phantom, BN series,rebinning}
	\end{axis}
\end{tikzpicture}\\
\textbf{\textrm{(a)}} & \textbf{\textrm{(b)}} \\
\end{tabular}
\caption{\textbf{\textrm{(a)}}: Central profile ($y = 0$) of the feature phantom and the reconstruction through FBP by our BN series method. \textbf{\textrm{(b)}}: Profile over the red segment in Figure \ref{phantom}\textbf{\textrm{(a)}} showing a comparable level of target features reconstructed with both methods.}
\label{fig:profil}
\end{figure}

\begin{figure}
    \captionsetup[subfigure]{labelformat=empty}
    \centering
    %\begin{flushleft}$\quad$BN series\end{flushleft} 
    %\vspace{3pt} \hrule width 0.4\textwidth \vspace{-5pt}
    \subfloat{\rotatebox{90}{\small BN series\phantom{g}}\ \includegraphics[width=0.32\textwidth]{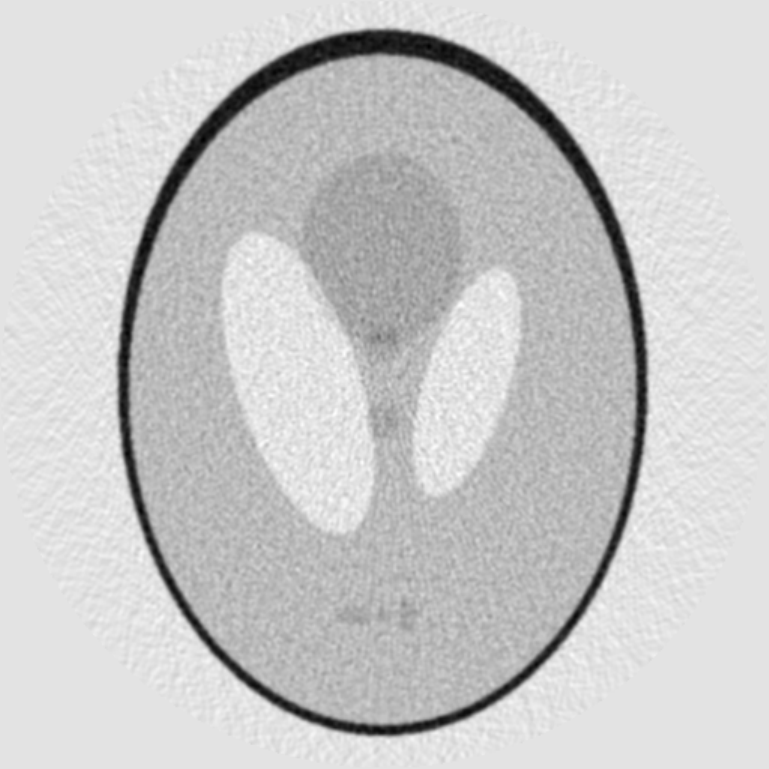}} 
    \subfloat{\includegraphics[width=0.32\textwidth]{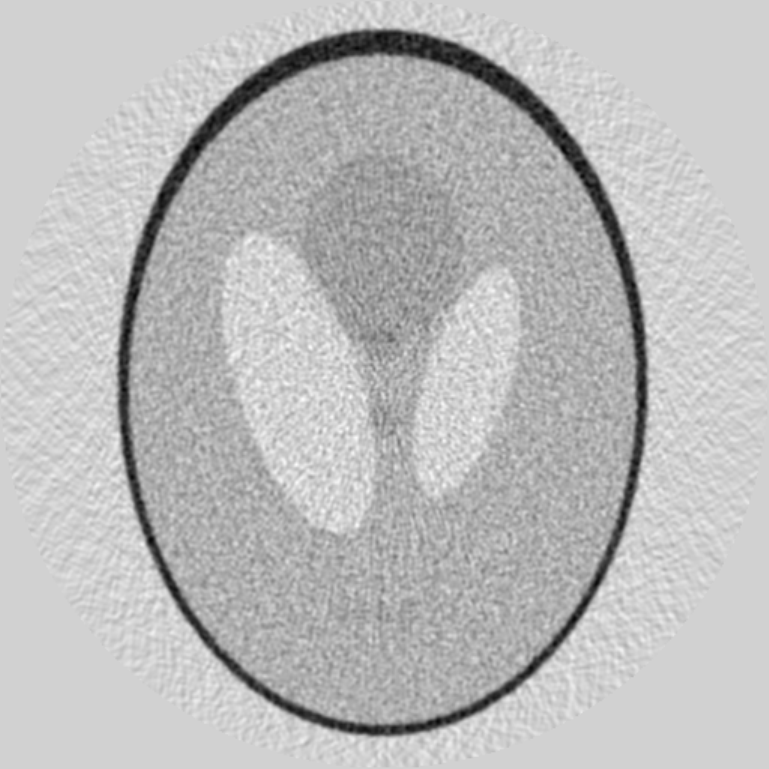}}
    \subfloat{\includegraphics[width=0.32\textwidth]{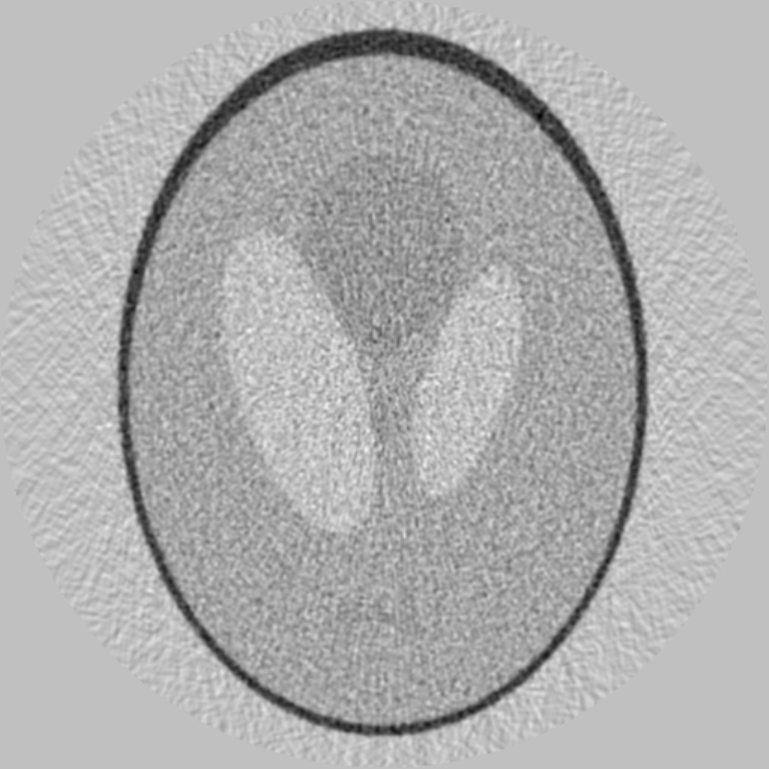}}
    \\\vspace{-10pt}
    %\begin{flushleft}$\quad$Rebinning\end{flushleft}
    %\vspace{3pt} \hrule width 0.4\textwidth \vspace{-5pt}
    \subfloat[\hspace{10pt}Sinogram MSE: $0.5$]{\rotatebox{90}{\small Rebinning}\ \includegraphics[width=0.32\textwidth]{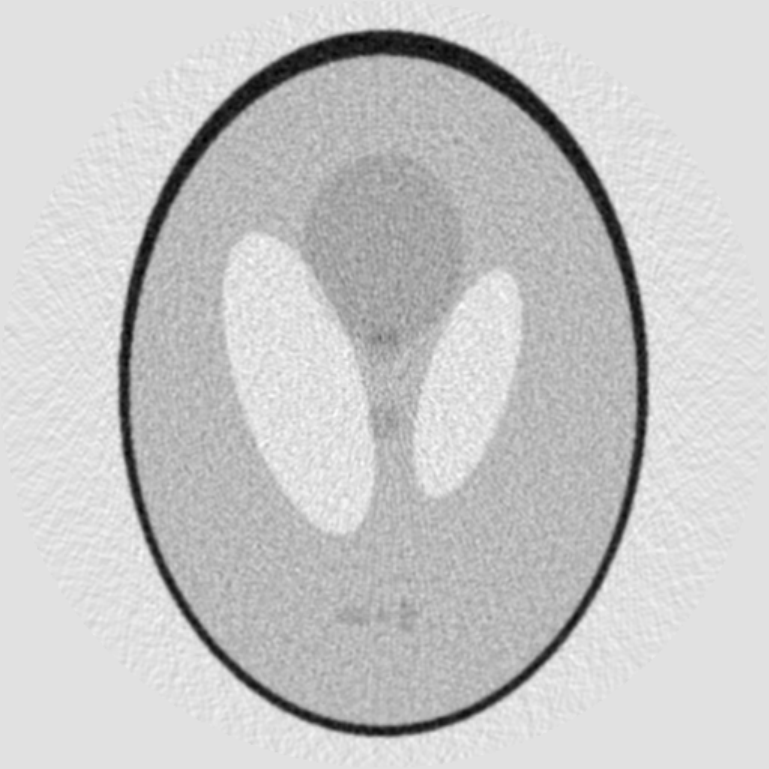}} 
    \subfloat[Sinogram MSE: $2$]{\includegraphics[width=0.32\textwidth]{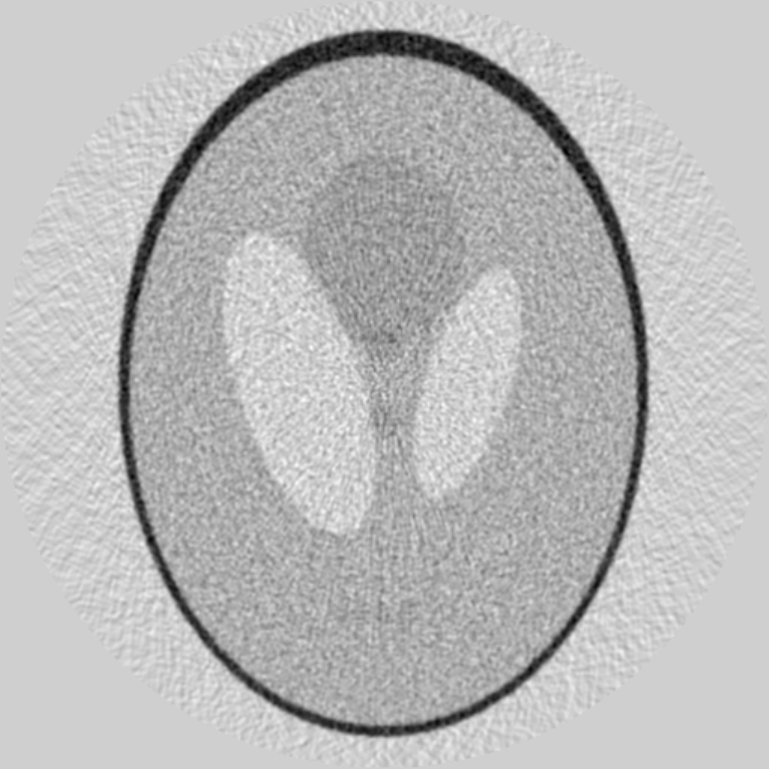}}
    \subfloat[Sinogram MSE:  $4$]{\includegraphics[width=0.32\textwidth]{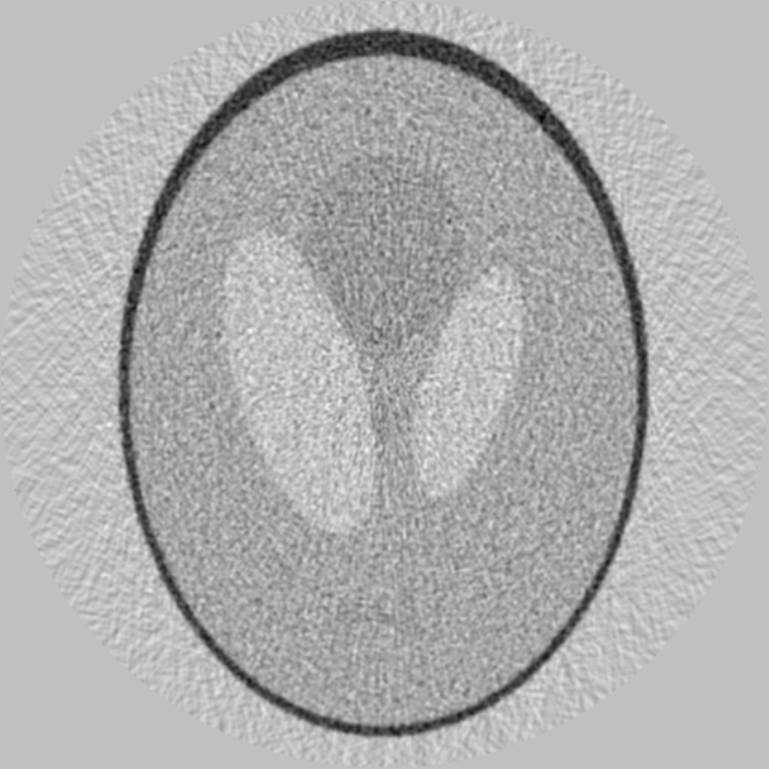}}
    \caption{Reconstructions using the BN series (first row) and rebinning (second row) with increasing sinogram Poisson noise.}
\label{fig:recs}
\vspace{10pt}
\begin{tabular}{ll}
\begin{tikzpicture}
\begin{axis}[xlabel={\small Sinogram MSE}, ylabel={\small Reconstruction MSE}, legend style={at={(0,1)}, anchor=north west, font=\footnotesize}, width = .5\textwidth, height = .4\textwidth, y label style={at={(axis description cs:0.1,.5)}}, xtick={0,1,2,3,4}]
\addplot[color=black, mark=square] coordinates {
	(0, 0.67)
	(0.5, 3.12)
	(1, 5.07)
	(1.5, 6.81)
    (2, 8.59)
    (2.5, 10.51)
    (3, 12.47)
    (3.5, 14.41)
    (4, 16.09)
};
\addplot[color=black, mark=o] coordinates {
	(0, 2.09)
	(0.5, 4.05)
	(1, 5.69)
	(1.5, 7.16)
	(2, 8.63)
	(2.5, 10.23)
	(3, 11.83)
	(3.5, 13.47)
	(4, 14.81)
};
\addplot[color=black, mark=star] coordinates {
	(0, 2.09)
	(0.5, 4.09)
	(1, 5.76)
	(1.5, 7.26)
	(2, 8.77)
	(2.5, 10.42)
	(3, 12.05)
	(3.5, 13.73)
	(4, 15.10)
};
\legend{rebinning, linear BN, quadratic BN}
\end{axis}
\end{tikzpicture} &
%%%%%%%%%%%%%%%%%%%%%%%%%%%%%%%%%%%%%%%%%%%%%%%%%%%%%%%%%
\begin{tikzpicture}
\begin{axis}[xlabel={\small Sinogram MSE}, ylabel={\small Reconstruction MAE}, legend style={at={(0,1)}, anchor=north west, font=\footnotesize}, width = .5\textwidth, height = .4\textwidth, y label style={at={(axis description cs:0.1,.5)}}, xtick={0,1,2,3,4}]
\addplot[color=black, mark=square] coordinates {
	(0, 3.41)
	(0.5, 21.92)
	(1, 29.47)
	(1.5, 34.75)
	(2, 39.39)
	(2.5, 43.78)
	(3, 47.78)
	(3.5, 51.54)
	(4, 54.52)
};
	\addplot[color=black, mark=o] coordinates {
	(0, 5.96)
	(0.5, 21.97)
	(1, 28.77)
	(1.5, 33.54)
    (2, 37.72)
    (2.5, 41.68)
    (3, 45.27)
    (3.5, 48.73)
    (4, 51.34)
};
    \legend{rebinning, BN series}
	\end{axis}
\end{tikzpicture} 
\\
\textbf{\textrm{(a)}} & \textbf{\textrm{(b)}} \\
\end{tabular}
\caption{\textbf{\textrm{(a)}}: Reconstruction MSE vs sinogram MSE. 
\textbf{\textrm{(b)}}: Reconstruction MAE vs sinogram MSE.}
\label{fig:mse}
\end{figure}

\paragraph*{Implementation and complexity.}

Implementation details of the two steps of Theorem \ref{teolinear} follow. Step \emph{(i)} is a sequence of two 1D interpolations, $\mathcal{L}$ followed by $\mathcal{A}$ to obtain $\dot z$. $\mathcal{L}$ is a rebinning on the spatial variable from linear to standard geometry, so only needed in the linear case, and $\mathcal{A}$ is a linear operation on the angular variable then not causing strong rebinning errors with linear interpolation. More importantly, both operations are not expensive having complexity $O(N N_\theta)$ and they were easily parallelized as they are both one-dimensional. %, in our case with $m$ threads.  

Step \emph{(ii)} is more challenging, we need to compute a truncated BN series (\ref{eq:main_backG}) until convergence. Of course, the number of terms in the sequence $\{J_n\}$ is the same as that of $\{b_n\}$, which itself is the number of samples of $s$ in the sinogram $g$ following the Nyquist sampling theorem \cite{kak1988}. Convergence of the series can thus be reached by first zero-padding the linear variable $s$ of $g$, or equivalently, $\gamma$ of $\dot z$ after performing operation $\mathcal{A}$.   

Computing coefficients $\{b_n\}$ requires $N_\theta$ independent 1D Fourier transforms to obtain (\ref{cn}) followed by the easy operation (\ref{bn}). They were also parallelized in $\theta$ having a complexity $O(N \log N)$ each with Fast Fourier techniques. % Zero-padding was needed here 
Importantly, the sequence $ \{J_n(D \sigma)\}$ in (\ref{eq:main_backG}) not depending on the sample is computed only once for a given setup and can be used as a \emph{lookup table} for the computation of (\ref{eq:main_backG}), not being costly at computing time. 

After truncating sum (\ref{eq:main_backG}), computing it results simply in a matrix multiplication of the matrices composed with elements $\{\dot b_n(\theta)\}$ and $\{J_n(D \sigma)\}$ for sampled values of $\{\theta, \sigma\}$. The frequency variable $\sigma$ was sampled following the Nyquist sampling theorem. Such matrix multiplication has a complexity of $O(N^3)$ but can be reduced to $O(N^{2.3729})$ with fast matrix multiplication algorithms \cite{matrix, matrix2}, although we used the direct approach in our simulations. We recall that conventional approaches present a cost of $O(N^3)$. Even using a conventional $O(N^3)$ matrix multiplication algorithm, the simplicity and straightforward parallel implementation of matrix multiplication related to a standard backprojection algorithm makes this approach still interesting. Finally, the Fourier domain of the backprojected image is obtained at a polar grid followed by linear interpolation to Cartesian coordinates (see section \ref{interp} for comments on this crucial step) and the 2D inverse (fast) Fourier Transform; operations with respective complexity of $O(N^2)$ and $O(N^2\log N)$. 
%In order to obtain a Cartesian grid, we simply used bilinear interpolation because of the previously performed zero-padding of $Z$ in $\gamma$ also helps us to reduce interpolation errors at high frequencies. 
%We finally mention that backprojecting an image with our method assumes a null DC component on the target image, due to the kernel $1/\sigma$. The true DC component was approximated from the input sinogram as the average of a fan-beam projection  in the detector at a fixed angle $\beta$. 

\begin{figure}
    \centering
    \begin{tikzpicture}
	%\pgfplotsset{ticks=none}
	\begin{axis}[xlabel={\small Normalized frequency}, ylabel={\small Fourier ring correlation}, legend style={at={(1,1)},anchor=north east, font=\footnotesize}, height = 0.5\textwidth, width = .9\textwidth, y label style={at={(axis description cs:.0,.5)},anchor=north}]
	\addplot[color=black, densely dotted] table [x expr=\coordindex/128, y index=0] {halfbit.txt}; %densely dotted, densely dashed
	\addplot[color=black] table [x expr=\coordindex/128, y index=0] {phantom_frc.txt};
	\addplot[color=blue] table [x expr=\coordindex/128, y index=0] {BN_series_frc.txt};
	\addplot[color=brown] table [x expr=\coordindex/128, y index=0] {rebinning_frc.txt};
    \addplot[color=black, mark=*, mark size=1pt] coordinates {(0.868704, 0.224887)};	
    \addplot[color=blue, mark=*, mark size=1pt] coordinates {(0.86744, 0.225074)};	
    \addplot[color=brown, mark=*, mark size=1pt] coordinates {(0.877605, 0.224428)};	
    \legend{half-bit threshold, phantom, BN series, rebinning}
	\end{axis}
	\end{tikzpicture} 
    \caption{Fourier ring correlation curves where the obtained resolutions are $(1/0.869,1/0.867,1/0.878)=(1.151, 1.153, 1.139)$ pixels respectively for the phantom, BN series and rebinning methods following the half-bit threshold criterion.}
    \label{fig:frc}
\end{figure}

\paragraph*{Reconstruction results.} 

To obtain image reconstructions, we filtered the parallel sinogram with the conventional ramp filter \cite{kak1988} before applying the rebinning operation $\MS$ to obtain a filtered fan-beam sinogram. We proceeded in this way for sake of simplicity because the filtering processes for fan-beam sinograms, detailed in \cite{kak1988, natterer2001mathematical},  is not in the scope of this work but only the backprojection operation. 

Two backprojections methods were implemented, the conventional rebinning through parallel geometry by operation  $\MS^{-1}$ in (\ref{eq.Madj}) followed by the standard $O(N^3)$ backprojection $\Back$ as presented in theorem \ref{theotwosteps} and our BN series method. The rebinning strategy, used here to evaluate the accuracy of our method, is often employed in available libraries that include fan-beam tomography as in \cite{astra}. The normalized mean square error (MSE), computed as $100 \norm{u- \tilde u}_2^2 / \norm{\tilde u}_2^{2}$, and the normalized mean absolute error (MAE), as $100 \norm{u- \tilde u}_1 / \norm{\tilde u}_1$, for a ground truth image $\tilde u$ and corrupted image $u$ are used as error metrics.

With noiseless data almost perfect reconstructions were obtained with the rebinning method (image not shown) mainly due to fact that the sinogram was obtained through $\MS$. With the BN series the results are also almost exact, shown in Figure \ref{phantom}\textbf{\textrm{(b)}}, the associated central $(y=0)$ profile in Figure \ref{fig:profil}\textbf{\textrm{(a)}} and a particular profile detail in Figure \ref{fig:profil}\textbf{\textrm{(b)}} where both methods show to reconstruct the same level of target features. Spatial resolution is estimated with the Fourier Ring Correlation (FRC) method with the half-bit criterion as suggested in \cite{frc2005}. Three FRC curves are calculated for the phantom and reconstructions with both methods in Figure \ref{fig:frc}. Negligible sub-pixel difference is obtained between the three curves especially in the region of the intersection with the threshold curve. Namely, $(1.151, 1.153, 1.139)$ pixels are respectively obtained as spatial resolution for the phantom, BN series and rebinning methods with a difference of around $0.01$ pixel between phantom and both methods. This shows that no significant resolution is lost with the BN method related to the conventional $O(N^3)$ rebinning method. Reconstruction MSE and MAE are still better with the rebinning method with noiseless data, as indicated in Figure \ref{fig:mse} for the null sinogram MSE case.

To evaluate the noise response of the algorithm, Poisson noise was added to sinograms with increasing sinogram MSE; reconstructed images are shown in Figure \ref{fig:recs} with no significant visual difference between both methods. However, as the noise increases, the BN series method behaves better as shown in Figure \ref{fig:mse}\textbf{\textrm{(a)}} where both reconstruction MSE appears to grow linearly with noise but the BN series with a weaker slope. This is supported with reconstruction MAE behavior in figure \ref{fig:mse}\textbf{\textrm{(b)}}, where the BN series overcomes the rebinning method even earlier (with less noise). %This suggest that the BN series method presents some artifact in some region of the reconstruction already with no noise that the rebinning method does not and is alleviated with noise. This was confirmed by identifying high-value pixels on the error images $|u- \tilde u|$ (not shown) with $u$ the noiseless reconstructed image and the absolute value is pixelwise. We found out some high-value pixels  on the boundary on the external ellipse of the BN series reconstruction not compromissing the rest of the phantom 
This could be due to the effect of 2D interpolations of operator  $\MS^{-1}$ on noisy sinograms. The polar-to-Cartesian mapping on the Fourier domain is performed both with linear and quadratic splines interpolation on Figure \ref{fig:mse}\textbf{\textrm{(a)}} as commented in Section \ref{interp} to show that with the adequate sampling, linear interpolation is enough within polynomial interpolation but can be further improved with advanced interpolation as gridding or regularized strategies.

Both algorithms were implemented on a multithreading architecture for each step. We used the Python library \texttt{scipy} \cite{scipy} to compute Fast Fourier transforms, interpolations and Bessel coefficients. Besides the theoretical interest of our formula (\ref{eq:main_backG}), computing time can be reduced with optimized code related to conventional backprojections thanks to the lower computational complexity, when fully exploiting fast matrix multiplication and fast Fourier transform algorithms.

\section{Conclusions}

We have proposed in this work a low-complexity analytic backprojection formulation for fan-beam tomography with standard and linear detectors to be easily implemented as a matrix multiplication algorithm as well as their associated formulations of the adjoint transform on Hilbert spaces. The linear case being more interesting in a synchrotron context was developed as a two-step process. The first step consists in a sequence of simple one-dimensional operators where a standard fan-beam sinogram is obtained. The second step, the backprojection operation, is performed by writing the Fourier transform of the backprojected image as a Bessel-Neumann series on the frequency variable $\sigma$ weighted by $1/\sigma$. The coefficients of the expansion are in fact the Fourier coefficients of the sinogram obtained in the first step. This approach, based on the low cost backprojection BST formula \cite{miqueles2018backprojection} for parallel sinograms, presents a reduction on the computational cost related to conventional fan-beam backprojections when fast matrix multiplication and fast Fourier transforms algorithms are used, and a simpler formulation as a two-matrix multiplication. Iterative reconstruction algorithms where the backprojection process is computed on each iteration, as the EM algorithm \cite{natterer2001mathematical} can also take advantage of this formulation to obtain robust reconstructions.  A second interesting feature is the absence of a strong rebinning process from fan to parallel beam projections as is mostly done in other fan-backprojection algorithms. This results in better handling of noisy data where interpolations can have strong negative effects as shown in our numerical simulations when linearly increasing the noise level and our method outperforms the conventional rebinning strategy. %When dealing with standard fan-beam data, the first step of the algorithm is more straightforward where only one linear change of variables is needed without lying on interpolations errors. %In addition to the formal proof, numerical results support the validity of our method.            

%Once the backprojection for linear fan sinograms is efficiently performed, a complete inversion algorithm can be implemented for cone-beam tomography with plane detectors. The \textsc{fdk} formula \cite{fdk} is widely used for this aim. Briefly, the cones are considered as a set sloped fans having the same source in $\mathbb{R}^3$ and parallel line detectors placed over the plane detector. A change of variables \cite{zhao2015generalized} is needed to write a fan in $\mathbb{R}^2$ as a sloped fan in $\mathbb{R}^3$  and backproject it. The convolution filter is easily derived similar to the parallel beam case. Details of the full reconstruction using the \textsc{fdk} formula are presented in \cite{natterer2001mathematical}.

%Finally, the Bessel-Neumann representation for the Backprojection $\BackS g$ in the frequency domain (\ref{eq:main}) can be computed numerically with a limited number of terms $N$. This is true due to the fact that $J_n(x)/x$ has a pointwise convergence to zero due to $|J_n(x)| \leq  |\frac{1}{2}x|^n / n!$ for all $x \in \mathbb R_+$ \cite[eq.~9.1.62]{Abramowitz}. Finding the right choice for $N$ is a study beyond the scope of this work. 

\subsection*{Acknowledgements.}
PG was supported by the FWO-SBO MetroFleX project (grant
agreement S004217N).

\bibliographystyle{abbrv} %amsplain
\bibliography{main.bib}

\end{document}